\global\long\def\argmin{\operatorname*{argmin}}%
\definecolor{green}{rgb}{0,0.6,0.0}
\newtheorem{thm}{Theorem}[section]
\newtheorem{lem}[thm]{Lemma}
\newtheorem{lemma}[thm]{Lemma}
\newtheorem{proposition}[thm]{Proposition}
\newcommand{\beq}{\begin{equation}}
\newcommand{\eeq}{\end{equation}}
\newcommand{\beqa}{\begin{eqnarray}}
\newcommand{\eeqa}{\end{eqnarray}}
\newcommand{\beqas}{\begin{eqnarray*}}
\newcommand{\eeqas}{\end{eqnarray*}}
\newcommand{\bi}{\begin{itemize}}
\newcommand{\ei}{\end{itemize}}
\newcommand{\vgap}{\vspace{.1in}}
\newcommand{\R}{\mathbb{R}}
\newcommand{\lam}{{\lambda}}
\newcommand{\inner}[2]{\langle #1,#2\rangle}
\newcommand{\dom}{\mathrm{dom}\,}
\newcommand{\tx}{\tilde x}
\title{FISTA and Extensions -- Review and New Insights}
\author{Weiwei Kong, Jefferson G. Melo, and Renato D.C. Monteiro}
\date{July 2021}
\begin{document}

\maketitle

\section{Introduction}

The purpose of this technical report is to review the main properties of
 an accelerated composite gradient (ACG) method commonly referred to
as the Fast Iterative Shrinkage-Thresholding Algorithm (FISTA). In addition, we state a version of FISTA for solving both convex and strongly convex
 composite minimization problems and derive its iteration
complexities to generate iterates satisfying
various stopping criteria, including one which arises in the course of
solving other composite optimization problems via inexact
proximal point schemes.
This report also discusses different reformulations of the convex version
of FISTA and how they relate to other formulations in the literature.

\vgap 

\noindent \textit{Organization}.
Section~\ref{sec:ACG} contains three  subsections. The first one describes a composite optimization problem and its main assumptions. The second subsection states and analyze a variant of FISTA, called S-FISTA, for solving the aforementioned problem. The third subsection establishes some iteration-complexity bounds for S-FISTA to obtain approximate stationary solution for the composite optimization problem we are interested in.  Section~\ref{Sec:Fista-mu=0} presents  an alternative formulation for S-FISTA and shows that it becomes the well-known FISTA for solving (non strongly) convex composite optimization problems.

\subsection{A Brief History of FISTA}

An earlier prototype of FISTA was given in \cite{nesterov1983method}, which proposed an ACG method named the Fast Gradient Method (FGM) for solving smooth convex (non-composite) optimization problems. FISTA, which is an extension of \cite{nesterov1983method} to smooth convex composite optimization problems, was then proposed in \cite{beck2009fast}. Its monotonically decreasing variant called M-FISTA was later proposed in \cite{beck2009fast_v2}. 

\section{A Strongly Convex Extension of FISTA}\label{sec:ACG}
This section contains three subsections. The first one describes a composite optimization problem and its main assumptions. The second subsection states and analyze a variant of FISTA, called S-FISTA, for solving the aforementioned problem. The third subsection establishes some iteration-complexity bounds for S-FISTA to obtain approximate stationary solution for the composite optimization problem we are interested in.

\subsection{Problem Description and Assumptions}

We are interested in the following problem
\begin{equation}\label{OptProb}
\phi^*:=\min\{ \phi(x):= f(x) + h(x) : x \in \R^n\}
\end{equation}
where $f: \R^n\to \R$ is a differentiable $\bar\mu_f$-convex function and $h:\R^n \to \R\cup\{+\infty\}$ is a possibly nonsmooth $\bar\mu_h$-convex function, and $\bar\mu_f, \bar\mu_h\geq 0$.

In addition, the following assumptions are made.

\begin{itemize}
\item[\textbf{(A)}] Problem \eqref{OptProb} has an optimal solution.

\item[\textbf{(B)}]
There exists a scalar $\bar{L}_f \geq \bar\mu_f $ such that 
\begin{equation}\label{ineq:uppercurvature}
f(\cdot)\leq l_f(\cdot,z)+\frac{\bar{L}_f}{2}\|\cdot - z\|^2,     \end{equation}
where
\begin{equation}\label{def:linearization f}
l_f(\cdot,z):= f(z)+\inner{\nabla f(z)}{\cdot -z}.    
\end{equation}
\end{itemize}

Clearly the following inclusion holds for any solution $x^*$ of \eqref{OptProb}:
$$
0\in \nabla f(x^*)+\partial h(x^*).
$$
For a given tolerance $\rho>0$, we say that a pair $(y,u)\in \R^n\times\R^n$ is a $\rho$-approximate stationary solution for problem \eqref{OptProb} if the following relations hold
\begin{equation}\label{def:approx-solution1}
u\in \nabla f(y)+\partial h(y), \quad \|u\|\leq \rho.    
\end{equation}

Next, we introduce a scalar which measures the distance of  the initial point $x_0$ to the solution set of \eqref{OptProb}.
\begin{equation}\label{def:d0}
d_0:= \min \{\|x_0-x^*\|: x^*\;\mbox{is a solution of}\; \eqref{OptProb}\}.    
\end{equation}
Recall that if a function $\psi:\R^n\to\R\cup\{+\infty\}$ is $\nu$-convex then, for every $z^*$ that minimizes $\psi$, we have
\begin{equation}\label{ineq:nu-convex}
\psi(z^*) +\frac{\nu}{2}\|\cdot - z^*\|^2\leq \psi(\cdot).     
\end{equation}
Moreover, since $f$ is $\bar\mu_f$-convex, the following inequality holds for any $z\in \R^n$:
\begin{equation}\label{ineq:muf-convex}
l_f(\cdot,z) +\frac{\bar\mu_f}{2}\|\cdot - z\|^2\leq f(\cdot),    
\end{equation}
where $l_f(\cdot,z)$ is as in \eqref{def:linearization f}.

Throughout this note we use the following notation $\log^+_1(\cdot):=\max\{\log(\cdot),1\}$.
\subsection{Statement and Properties of S-FISTA}

Recall that FISTA is a popular ACG variant for solving \eqref{OptProb}
for the case where $\bar{\mu}_f=\bar{\mu}_h=0$.
This subsection describes an extension of FISTA for solving
\eqref{OptProb} for the general case where $\bar{\mu}_f, \bar{\mu}_h \ge 0$.

We start by stating a strongly convex variant of FISTA, referred
to as S-FISTA,
for solving \eqref{OptProb}.

\begin{algorithm}[H]
	\caption{(S-FISTA)}
	\begin{algorithmic}
		\STATE 0. Let initial point $ x_0 \in \dom h$ and scalars $L_f > \bar{L}_f$, $\mu_f \in [0,\bar \mu_f]$ and
		$\mu_h \in [0,\bar \mu_h]$ be given,
		and set $ x_0=y_0 $, $ A_0=0 $, $\tau_0=1$, $\lam=1/(L_f - \mu_f)$, $\mu=\mu_f+\mu_h$, and $ k=0 $;
		\STATE 1. Compute
		\begin{equation}\label{def:ak-sfista}
		a_k=\frac{\lam\tau_k+\sqrt{(\lam\tau_k)^2+4\lam \tau_k A_k}}{2}, \quad A_{k+1}=A_k+a_k, \quad \tx_k=\frac{A_ky_k+a_kx_k}{A_{k+1}};
		\end{equation}
		\STATE 2. Compute
		\begin{align}
		y_{k+1}&:=\underset{x\in \dom h}\argmin\left\lbrace q_k^L (x;\tx_k) 
		:= \ell_f(x;\tilde x_k) + h(x) + \frac{L}{2}\|x-\tx_k\|^2\right\rbrace,
		\label{eq:ynext-sfista}\\
		\tau_{k+1} &= \tau_k + \mu a_k,  \label{eq:taunext-sfista} \\
		\quad x_{k+1}&= \frac{1}{\tau_{k+1}} \left[ \frac{a_k}{\lam}(y_{k+1}-\tx_k)+ \mu a_k y_{k+1} + \tau_k x_k \right] ; \label{eq:xnext-sfista}
		\end{align}
		\STATE 3. Set $ k \leftarrow k+1 $ and go to step 1.
	\end{algorithmic}
\end{algorithm}

We now make some comments about S-FISTA.
First, if $\mu=0$, then $\tau_k=1$ for every $k\geq 0$.
Second, the first and second relations in \eqref{def:ak-sfista} imply that
\begin{equation} \label{eq:Aalam-sfista}
 \frac{\tau_k  A_{k+1}}{ a_k^2}= \frac1\lam = L_f - \mu_f.
\end{equation}
Third, it will be shown in Section~\ref{Sec:Fista-mu=0} that when $\mu=0$, S-FISTA is actually FISTA.

Next, we  present some technical lemmas about S-FISTA.
\begin{lemma}\label{lem:gamma-sfista}
	For every $ k\ge 0 $ and $x \in \R^n$, define 
	\begin{align}
		\tilde \gamma_k(x)&:= \ell_f(x;\tilde x_k)  + h(x) + \frac{\mu_f}2 \|x-\tx_k\|^2, \label{def:tgamma-sfista} \\
		\gamma_k(x)&:= \tilde \gamma_k(y_{k+1}) + \frac{1}{\lam}\inner{\tx_k - y_{k+1}}{x - y_{k+1}}
		+ \frac{\mu}2 \|x-y_{k+1}\|^2. \label{def:gamma-sfista}
	\end{align}
	Then, the following statements hold for every $k \ge 0$:
	\begin{itemize}
		\item[a)] $ \tilde \gamma_k(y_{k+1}) = \gamma_k(y_{k+1}) $;
		\item[b)]
		 $\tilde \gamma_k \le \phi$ and
		\beq
		y_{k+1} = \argmin_{x}\left\{\tilde{\gamma}_{k}(x)+\frac{1}{2 \lam}\left\|x-\tilde{x}_{k}\right\|^{2}\right\} \label{eq:min-sfista'};
		\eeq
		\item[c)] $\gamma_k \le \tilde \gamma_k$ and
		\begin{equation}\label{eq:min-sfista}
		y_{k+1} = \min _{x}\left\{\gamma_{k}(x)+\frac{1}{2 \lam}\left\|x-\tilde{x}_{k}\right\|^{2}\right\};
		\end{equation}
		\item[d)] $x_{k+1}=\underset{x \in \R^n}\argmin\left\{a_{k} \gamma_{k}(x)+\tau_k \left\|x-x_{k}\right\|^{2} /2 \right\}$
		and
		\[
		x_{k+1} = x_k + \frac{a_k}{\tau_{k+1}} \left[ \frac{1}{\lam}(y_{k+1}-\tx_k)+ \mu ( y_{k+1} - x_k) \right];
		\]
\item[e)] $\tau_k = 1+A_k \mu$.
	\end{itemize}
\end{lemma}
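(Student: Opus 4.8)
The plan is to establish the five items in the order listed, since each leans on its predecessors, and to exploit two structural facts throughout. The first is that the choice $\lam = 1/(L_f-\mu_f)$ forces $\frac{1}{2\lam}+\frac{\mu_f}{2}=\frac{L_f}{2}$, so that the prox-regularized objectives appearing in parts (b) and (c) collapse to $\ell_f(x;\tx_k)+h(x)+\frac{L_f}{2}\|x-\tx_k\|^2$, which is exactly the subproblem \eqref{eq:ynext-sfista} defining $y_{k+1}$. The second is that $\tilde\gamma_k$ is a $\mu$-convex function: by \eqref{def:tgamma-sfista} it is the sum of the affine map $\ell_f(\cdot;\tx_k)$, the $\mu_h$-convex function $h$, and the $\mu_f$-strongly convex quadratic $\frac{\mu_f}{2}\|\cdot-\tx_k\|^2$, whose moduli add to $\mu=\mu_f+\mu_h$. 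With these in hand, part (a) is immediate: substituting $x=y_{k+1}$ into \eqref{def:gamma-sfista} kills the inner-product and quadratic terms, leaving $\gamma_k(y_{k+1})=\tilde\gamma_k(y_{k+1})$.

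For part (b), the bound $\tilde\gamma_k\le\phi$ follows from the convexity inequality \eqref{ineq:muf-convex} together with $\mu_f\le\bar\mu_f$, which give $\ell_f(\cdot;\tx_k)+\frac{\mu_f}{2}\|\cdot-\tx_k\|^2\le f$; adding $h$ to both sides yields $\tilde\gamma_k\le f+h=\phi$. For the minimization identity, I would add $\frac{1}{2\lam}\|x-\tx_k\|^2$ to \eqref{def:tgamma-sfista} and invoke the first structural fact to recognize the resulting objective as the one in \eqref{eq:ynext-sfista}, whose $\argmin$ is $y_{k+1}$ by construction.

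Part (c) is where the only real work lies. The minimization characterization from (b) gives the first-order optimality condition $\frac{1}{\lam}(\tx_k-y_{k+1})\in\partial\tilde\gamma_k(y_{k+1})$. Plugging this subgradient into the $\mu$-convexity inequality for $\tilde\gamma_k$ evaluated at the base point $y_{k+1}$ produces $\tilde\gamma_k(x)\ge\tilde\gamma_k(y_{k+1})+\frac{1}{\lam}\inner{\tx_k-y_{k+1}}{x-y_{k+1}}+\frac{\mu}{2}\|x-y_{k+1}\|^2$, whose right-hand side is exactly $\gamma_k(x)$ by \eqref{def:gamma-sfista}; hence $\gamma_k\le\tilde\gamma_k$. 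The accompanying minimization identity is then a short gradient computation: since $\gamma_k$ is quadratic with gradient $\frac{1}{\lam}(\tx_k-y_{k+1})+\mu(x-y_{k+1})$, the gradient of $\gamma_k(x)+\frac{1}{2\lam}\|x-\tx_k\|^2$ simplifies to $\left(\frac{1}{\lam}+\mu\right)(x-y_{k+1})$, which vanishes only at $x=y_{k+1}$.

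For part (d), I would set the gradient of the strongly convex quadratic $a_k\gamma_k(x)+\frac{\tau_k}{2}\|x-x_k\|^2$ to zero, solve for the minimizer, and use $\tau_{k+1}=\tau_k+\mu a_k$ from \eqref{eq:taunext-sfista} to identify the coefficient $a_k\mu+\tau_k$ as $\tau_{k+1}$; the solution then matches \eqref{eq:xnext-sfista} verbatim, giving the $\argmin$ claim. The second displayed formula is pure algebra: expanding the claimed right-hand side and again using $\tau_{k+1}-a_k\mu=\tau_k$ collapses it back to \eqref{eq:xnext-sfista}. Finally, part (e) is a one-line induction, with base case $\tau_0=1=1+A_0\mu$ and step $\tau_{k+1}=\tau_k+\mu a_k=1+\mu(A_k+a_k)=1+\mu A_{k+1}$ via \eqref{eq:taunext-sfista} and \eqref{def:ak-sfista}. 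The subgradient argument in (c) is the only nonroutine step; every other item reduces to substitution, the known convexity bound, or direct computation.
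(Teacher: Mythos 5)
Your proposal is correct and follows essentially the same route as the paper's proof: the identity $\tfrac{1}{2\lam}+\tfrac{\mu_f}{2}=\tfrac{L_f}{2}$ for (b), the strong-convexity subgradient inequality at $y_{k+1}$ for (c) (which the paper phrases via the auxiliary convex function $\hat\gamma_k=\tilde\gamma_k-\tfrac{\mu}{2}\|\cdot-y_{k+1}\|^2$, but this is the same argument), and direct first-order/algebraic verification for (d) and (e).
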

\begin{proof}
	a) It clearly follows from \eqref{def:gamma-sfista} that $ \gamma_k(y_{k+1}) = \tilde \gamma_k(y_{k+1}) $.
	
	b) The inequality $\tilde \gamma_k \le \phi$ follows from the definition of $\phi$ in \eqref{OptProb}, \eqref{ineq:muf-convex}, and \eqref{def:tgamma-sfista}.
	Moreover, \eqref{eq:min-sfista'} follows from \eqref{eq:ynext-sfista}, \eqref{def:tgamma-sfista}, and the fact
	that $\lam=1/(L_f - \mu_f)$.

	c) Define $\hat \gamma_k := \tilde \gamma_k - \mu\|\cdot-y_{k+1}\|^2/2$. Since $\tilde \gamma_k$ is $\mu$-convex,
	it follows that $\hat \gamma_k$ is convex, and hence that $\partial \tilde \gamma_k(y_{k+1})=\partial \hat \gamma_k(y_{k+1})$,
	in view of the subgradient rule for the sum of two convex functions.
	Also, the optimality condition for \eqref{eq:min-sfista'} implies that
	\[
	\frac{\tx_k-y_{k+1}}{\lam_k} \in \partial \tilde \gamma_k(y_{k+1}) =
	\partial  \hat \gamma_k   (y_{k+1}),
	\]
	which, in view of the definition of $\hat \gamma_k$ and its subgradient at $y_{k+1}$, is easily seen to be equivalent to
	$\gamma_k \le \tilde \gamma_k$. Now, since $\nabla \gamma_k(y_{k+1})=(\tx_k-y_{k+1})/\lam$ by \eqref{def:gamma-sfista},
	we easily see that
	$y_{k+1}$ satisfies the optimality condition for \eqref{eq:min-sfista}, and hence  \eqref{eq:min-sfista} in follows.
	
%

	d) It follows from \eqref{def:gamma-sfista}, \eqref{eq:taunext-sfista} and \eqref{eq:xnext-sfista} that
	\begin{align*}
	a_k \nabla \gamma_k(x_{k+1})  + \tau_k (x_{k+1}-x_k) &=
	\left[ \frac{a_k}{\lam} (\tilde x_k-y_{k+1}) + a_k \mu ( x_{k+1}-y_{k+1} ) \right] + \tau_k (x_{k+1}-x_k) \\
&=  \frac{a_k}{\lam} (\tilde x_k-y_{k+1}) +  \tau_{k+1}  x_{k+1} - a_k \mu y_{k+1}  - \tau_k x_k = 0,
	\end{align*}
	and hence that the first claim in d) follows. The second claim follows  from \eqref{eq:taunext-sfista} and \eqref{eq:xnext-sfista}.
	 
	e) This identity follows follows immediately from \eqref{eq:taunext-sfista} and the second identity in \eqref{def:ak-sfista}.
\end{proof}

\vgap

\begin{lemma} \label{lm:hysub-2-sfista}
For every $k \ge 0$ and $x \in \dom h$, we have
\[
q^L_k(x;\tx_k)  \ge \phi(x) + \frac12 \left( L_f  - \bar{L}_f \right) \|x - \tx_k \|^2  
\]
where $q_k^{L}(\cdot;\cdot)$ is defined in \eqref{eq:ynext-sfista}.
\end{lemma}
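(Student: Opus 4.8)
The plan is to unfold both sides of the claimed inequality using the relevant definitions and to observe that, once the common nonsmooth term cancels, the statement reduces exactly to the upper-curvature condition in Assumption~(B). First I would note that the prox parameter in $q_k^L$ should be read as $L = L_f$: indeed, the minimization \eqref{eq:ynext-sfista} defining $y_{k+1}$ is the same as \eqref{eq:min-sfista'} precisely because $\mu_f + 1/\lam = L_f$, as already used in the proof of Lemma~\ref{lem:gamma-sfista}(b).

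Substituting the definition of $q_k^L(\cdot;\tx_k)$ from \eqref{eq:ynext-sfista} and of $\phi = f + h$ from \eqref{OptProb}, and cancelling the term $h(x)$ which appears additively on both sides, I would reduce the claim to
\[
\ell_f(x;\tx_k) + \frac{L_f}{2}\|x-\tx_k\|^2 \ge f(x) + \frac12\left(L_f - \bar{L}_f\right)\|x-\tx_k\|^2,
\]
where $\ell_f(\cdot;\tx_k)$ is the affine function $l_f(\cdot,\tx_k)$ of \eqref{def:linearization f}. Collecting the coefficients of $\|x-\tx_k\|^2$ on the two sides gives $\tfrac12\big(L_f - (L_f - \bar L_f)\big) = \bar L_f/2$, so this inequality is in turn equivalent to
\[
f(x) \le \ell_f(x;\tx_k) + \frac{\bar L_f}{2}\|x-\tx_k\|^2,
\]
which is exactly \eqref{ineq:uppercurvature} evaluated at $x$ with base point $z = \tx_k$, and hence holds by Assumption~(B).

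There is essentially no real obstacle here: the only care required is in tracking the quadratic coefficients and in making the identification $L = L_f$ explicit. I would also remark that the final step holds with equality in the quadratic coefficient, so the same argument yields the stated bound for any $L \ge L_f$, and the value $L = L_f$ used by S-FISTA is the tight one.
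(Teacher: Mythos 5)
Your proof is correct and follows essentially the same route as the paper's: both cancel the common $h(x)$ term, split the quadratic coefficient as $\tfrac{L_f}{2} = \tfrac{\bar L_f}{2} + \tfrac12(L_f - \bar L_f)$, and invoke the upper-curvature inequality \eqref{ineq:uppercurvature} with base point $z = \tx_k$. Your explicit remark that the prox parameter $L$ in \eqref{eq:ynext-sfista} must be read as $L_f$ is a helpful clarification that the paper leaves implicit.
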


\begin{proof}
Using the definitions of  $\phi$ and $q^L_k(\cdot;\tx_k)$ in  \eqref{OptProb} and \eqref{eq:ynext-sfista}, respectively, and inequality \eqref{ineq:uppercurvature}, we have 
\begin{align*}
q^L_k(x;\tx_k)&= \left( \ell_f(x;\tilde x_k) + \frac{\bar{L}_f}2 \|x-\tx_k\|^2 \right)+ h(x) + \frac12 \left( L_f  - \bar{L}_f \right) \|x-\tx_k\|^2 \\
&\ge \phi(x) + \frac12 \left( L_f  - \bar{L}_f \right) \|x - \tx_k \|^2
\end{align*}
for every $x \in \R^n$.
\end{proof}

\begin{lemma} \label{lm:hysub-3-sfista}
	For every $ k\ge 0 $ and $x \in \R^n$, we have
	\begin{align}\label{ineq:recur}
	A_k\gamma_k(y_k) + a_k\gamma_k(x) + \frac12 \|x_k - x\|^2 - \frac12 \|x_{k+1} - x\|^2
	\ge A_{k+1} q_k^{L}(y_{k+1};\tx_k) .
	\end{align}
	where $\gamma_k(\cdot)$ and $q_k^{L}(\cdot;\cdot)$ are defined in \eqref{def:gamma-sfista} and
	\eqref{eq:ynext-sfista}, respectively.
\end{lemma}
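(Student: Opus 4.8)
Before seeing the author's proof, here is how I would approach this.

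The plan is to eliminate the free variable $x$ first and then collapse everything to a single perfect square. As a preliminary step I would rewrite the right‑hand side: since $q_k^L(\cdot;\tx_k)=\tilde\gamma_k(\cdot)+\tfrac1{2\lam}\|\cdot-\tx_k\|^2$ (both equal $\ell_f(\cdot;\tx_k)+h(\cdot)+\tfrac{L_f}2\|\cdot-\tx_k\|^2$, because $\lam=1/(L_f-\mu_f)$ by the definitions \eqref{eq:ynext-sfista} and \eqref{def:tgamma-sfista}), part~(a) of Lemma~\ref{lem:gamma-sfista} gives
\[
A_{k+1}q_k^L(y_{k+1};\tx_k)=A_{k+1}\gamma_k(y_{k+1})+\tfrac{A_{k+1}}{2\lam}\|y_{k+1}-\tx_k\|^2 .
\]

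To discard the dependence on $x$ I would invoke part~(d). Since $\gamma_k$ is an exact quadratic with Hessian $\mu I$, the map $z\mapsto a_k\gamma_k(z)+\tfrac{\tau_k}2\|z-x_k\|^2$ is an exact $\tau_{k+1}$‑strongly convex quadratic (recall $\tau_{k+1}=\tau_k+\mu a_k$) whose minimizer is $x_{k+1}$; expanding it about $x_{k+1}$ yields, for every $x$, the identity $a_k\gamma_k(x)+\tfrac{\tau_k}2\|x-x_k\|^2-\tfrac{\tau_{k+1}}2\|x-x_{k+1}\|^2=a_k\gamma_k(x_{k+1})+\tfrac{\tau_k}2\|x_{k+1}-x_k\|^2$. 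Thus the proximal terms telescope and the dependence on $x$ cancels identically, reducing the claim to the $x$‑free inequality
\[
A_k\gamma_k(y_k)+a_k\gamma_k(x_{k+1})+\tfrac{\tau_k}2\|x_{k+1}-x_k\|^2\ \ge\ A_{k+1}q_k^L(y_{k+1};\tx_k).
\]

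Next I would expand $A_k\gamma_k(y_k)+a_k\gamma_k(x_{k+1})$ by the exact second‑order Taylor identity for $\gamma_k$ centred at $y_{k+1}$, using $\nabla\gamma_k(y_{k+1})=(\tx_k-y_{k+1})/\lam$ from \eqref{def:gamma-sfista}. The linear part $A_k(y_k-y_{k+1})+a_k(x_{k+1}-y_{k+1})$ is rewritten, via the defining relation $A_{k+1}\tx_k=A_ky_k+a_kx_k$, as $A_{k+1}(\tx_k-y_{k+1})+a_k(x_{k+1}-x_k)$; this regenerates the term $\tfrac{A_{k+1}}{2\lam}\|y_{k+1}-\tx_k\|^2$ that cancels the right‑hand side. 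Writing $d:=\tx_k-y_{k+1}$, the difference of the two sides then equals
\[
\tfrac{A_{k+1}}{2\lam}\|d\|^2+\tfrac{a_k}{\lam}\langle d,x_{k+1}-x_k\rangle+\tfrac{\tau_k}2\|x_{k+1}-x_k\|^2+\tfrac{\mu A_k}2\|y_k-y_{k+1}\|^2+\tfrac{\mu a_k}2\|x_{k+1}-y_{k+1}\|^2 .
\]

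The only non‑routine step is to observe that the first three terms form a quadratic form in the pair $(d,\,x_{k+1}-x_k)$ whose symmetric coefficient matrix has determinant $\tfrac{A_{k+1}\tau_k}{4\lam}-\tfrac{a_k^2}{4\lam^2}$, and this vanishes \emph{exactly} because \eqref{eq:Aalam-sfista} reads $\tau_kA_{k+1}=a_k^2/\lam$. Hence the form is a genuine perfect square, equal to $\tfrac{A_{k+1}}{2\lam}\big\|d+\tfrac{a_k}{A_{k+1}}(x_{k+1}-x_k)\big\|^2\ge0$, while the two remaining strong‑convexity terms are manifestly nonnegative, which proves the inequality. The crux is therefore recognizing that the particular step size $a_k$ fixed by \eqref{def:ak-sfista} is precisely what renders the cross term a perfect square; every other manipulation is an exact identity.
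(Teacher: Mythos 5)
Your argument is correct, and it arrives at exactly the inequality the paper's proof establishes, but it is organized differently. The paper splits the work into two named inequalities: first it applies the $\tau_{k+1}$-strong convexity of $\psi_k=a_k\gamma_k+\tfrac{\tau_k}{2}\|\cdot-x_k\|^2$ at its minimizer $x_{k+1}$ (via \eqref{ineq:nu-convex} and Lemma~\ref{lem:gamma-sfista}(d)) to remove $x$, just as you do; but then, instead of expanding, it uses convexity of $\gamma_k$ (a Jensen step at the point $\bar z:=(A_ky_k+a_kx_{k+1})/A_{k+1}$), rewrites $\tfrac{\tau_k}{2}\|x_{k+1}-x_k\|^2$ as $\tfrac{A_{k+1}}{2\lam}\|\bar z-\tx_k\|^2$ via \eqref{eq:Aalam-sfista}, and finally invokes the minimality of $y_{k+1}$ for $\min_x\{\gamma_k(x)+\tfrac{1}{2\lam}\|x-\tx_k\|^2\}$ from Lemma~\ref{lem:gamma-sfista}(c). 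You bypass Jensen and part~(c) entirely, expand $\gamma_k$ exactly about $y_{k+1}$ using \eqref{def:gamma-sfista}, and exhibit the difference of the two sides as a perfect square plus two strong-convexity remainders; your perfect square $\tfrac{A_{k+1}}{2\lam}\|d+\tfrac{a_k}{A_{k+1}}(x_{k+1}-x_k)\|^2$ is precisely $\tfrac{A_{k+1}}{2\lam}\|\bar z-y_{k+1}\|^2$, so the two proofs discard the same total slack. What your version buys is a self-contained identification of where \eqref{def:ak-sfista} enters (the vanishing determinant); what the paper's version buys is brevity and reuse of the already-proved parts (b)--(d) of Lemma~\ref{lem:gamma-sfista}.

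One small remark, which concerns the paper's statement rather than your proof: what you actually prove (and what the paper's own proof also actually proves, and what Lemma~\ref{lm:hysub-30-fista} subsequently uses) is the inequality with the weighted terms $\tfrac{\tau_k}{2}\|x_k-x\|^2-\tfrac{\tau_{k+1}}{2}\|x_{k+1}-x\|^2$; the display \eqref{ineq:recur} as printed drops the factors $\tau_k,\tau_{k+1}$, which is only equivalent when $\mu=0$. You silently proved the correct (weighted) version, which is the right thing to do, but it is worth flagging the mismatch.
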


\begin{proof}
Using Lemma \ref{lem:gamma-sfista}(d), the facts  that $\tau_{k+1}=\tau_k+\mu a_k$ (see step 3 of Algorithm 1) and $\psi_k:=a_k\gamma_k(\cdot)+\tau_k \|\cdot-\tx_k\|^2/2$ is  $(\tau_k+\mu a_k)$-convex, it follows from 
\eqref{ineq:nu-convex} with $\psi=\psi_k$ and $\nu=\tau_{k+1}$ that 
\begin{align*}
a_k\gamma_k(x) + \frac{\tau_k}2 \|x-x_k\|^2 - \frac{\tau_{k+1}}2 \|x-x_{k+1}\|^2 \ge a_k\gamma_k(x_{k+1}) + \frac{\tau_k}2 \|x_{k+1}-x_k\|^2 \quad
\forall x \in \R^n.
\end{align*}
	Using the convexity of $ \gamma_k $, the definitions of $A_{k+1}$ and  $ \tx_k $ in \eqref{def:ak-sfista}, and relation \eqref{eq:Aalam-sfista}, we have
	\begin{align*}
		A_k\gamma_k(y_k) &+ a_k\gamma_k(x_{k+1}) + \frac{\tau_k}2 \|x_{k+1}-x_k\|^2 \\
		&\ge A_{k+1} \gamma_k\left( \frac{A_ky_k+a_kx_{k+1}}{A_{k+1}} \right) + \frac{\tau_kA_{k+1}^2}{2a_k^2}\left\| \frac{A_ky_k+a_kx_{k+1}}{A_{k+1}}- \frac{A_ky_k+a_kx_{k}}{A_{k+1}} \right\| ^2\\
		&= A_{k+1} \left[ \gamma_k\left( \frac{A_ky_k+a_kx_{k+1}}{A_{k+1}} \right) + \frac{1}{2\lam} \left\| \frac{A_ky_k+a_kx_{k+1}}{A_{k+1}}-\tx_k\right\| ^2\right] \\
		&\ge A_{k+1}\min_{x}\left\lbrace \gamma_k(x) + \frac{1}{2\lam}\|x-\tx_k\|^2\right\rbrace \\
		&= A_{k+1}\left[ \tilde \gamma_k(y_{k+1}) + \frac{L_f - \mu_f}{2}\|y_{k+1}-\tx_k\|^2\right] = A_{k+1}q_k^L(y_{k+1};\tx_k) 
	\end{align*}
	where the second last equality is due to Lemma \ref{lem:gamma-sfista}(b) and the fact that  $\lam^{-1}=L_f - \mu_f$,
	and the last one is due to \eqref{def:tgamma-sfista} and the definition of
	$q_k^{L}(\cdot;\cdot)$ in \eqref{eq:ynext-sfista}.
	The lemma now follows by combining the above two conclusions.
	\end{proof}
The next two results provide some important recursive formulas.	
\begin{lemma} \label{lm:hysub-30-fista}
	For every $ k\ge 0 $ and $x \in \R^n$, we have
	\begin{align}\label{ineq:recur2}
	A_k \phi(y_k) &+ a_k\gamma_k(x) + \frac{\tau_k}2 \|x_k - x\|^2 - \frac{\tau_{k+1}}2 \|x_{k+1} - x\|^2 \\
	&\ge A_{k+1} \phi(y_{k+1}) + \frac{A_{k+1}}2 \left( L_f - \bar{L}_f \right) \| y_{k+1} - \tx_k \|^2 .
	\end{align}
	where 
	$\gamma_k(\cdot)$ is  defined in \eqref{def:gamma-sfista}.
\end{lemma}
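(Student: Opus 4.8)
The plan is to obtain the claimed recursion by sandwiching it between the two recursions already in hand, Lemma~\ref{lm:hysub-3-sfista} and Lemma~\ref{lm:hysub-2-sfista}, with the global minorization $\gamma_k \le \phi$ from Lemma~\ref{lem:gamma-sfista} inserted on the left. Concretely, Lemma~\ref{lm:hysub-3-sfista} supplies the core inequality
\[
A_k\gamma_k(y_k) + a_k\gamma_k(x) + \frac{\tau_k}2 \|x_k - x\|^2 - \frac{\tau_{k+1}}2 \|x_{k+1} - x\|^2 \ge A_{k+1} q_k^{L}(y_{k+1};\tx_k),
\]
whose derivation in fact produces the $\tau_k$- and $\tau_{k+1}$-weighted distance terms appearing in the present statement (the displayed $\tfrac12$ in the statement of Lemma~\ref{lm:hysub-3-sfista} being the weights its proof actually carries). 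Starting from this inequality, I would alter only the single term $A_k\gamma_k(y_k)$ on the left and the single term $A_{k+1}q_k^{L}(y_{k+1};\tx_k)$ on the right, leaving the three quadratic terms and $a_k\gamma_k(x)$ untouched.

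For the left term, Lemma~\ref{lem:gamma-sfista}(b)--(c) gives $\gamma_k \le \tilde\gamma_k \le \phi$, so in particular $\gamma_k(y_k) \le \phi(y_k)$. Since $A_k \ge 0$ — which follows by induction from $A_0=0$, $\tau_0=1$, $\lam>0$, and the formula \eqref{def:ak-sfista}, which forces $a_k>0$ and hence $A_{k+1}=A_k+a_k>0$ — replacing $\gamma_k(y_k)$ by $\phi(y_k)$ only enlarges the left-hand side, so the displayed inequality remains valid after the substitution. This is the one step whose direction must be checked: I seek a lower bound on the left-hand side yet I \emph{increase} one of its terms, so it is precisely $\phi(y_k) \ge \gamma_k(y_k)$ (and not the reverse) that preserves \emph{LHS $\ge$ RHS}.

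For the right term, I would apply Lemma~\ref{lm:hysub-2-sfista} at the point $x = y_{k+1} \in \dom h$ (which lies in $\dom h$ by its definition in \eqref{eq:ynext-sfista}), obtaining $q_k^{L}(y_{k+1};\tx_k) \ge \phi(y_{k+1}) + \tfrac12(L_f - \bar{L}_f)\|y_{k+1} - \tx_k\|^2$, and then multiply through by $A_{k+1} \ge 0$ to recover exactly the right-hand side of the target. Chaining the enlarged left-hand side, the Lemma~\ref{lm:hysub-3-sfista} bound, and this lower bound on $A_{k+1}q_k^{L}(y_{k+1};\tx_k)$ finishes the argument. I do not expect a genuine obstacle here; the only points needing care are the nonnegativity of $A_k$ and $A_{k+1}$ (so that both substitutions point the correct way) and the bookkeeping observation that the distance weights in Lemma~\ref{lm:hysub-3-sfista} are the $\tau$-weighted ones, matching the present statement.
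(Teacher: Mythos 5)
Your proof is correct and follows the paper's own route exactly: it chains Lemma~\ref{lm:hysub-3-sfista} with the minorization $\gamma_k(y_k)\le\phi(y_k)$ (Lemma~\ref{lem:gamma-sfista}(b)--(c)) on the left and Lemma~\ref{lm:hysub-2-sfista} at $x=y_{k+1}$ on the right. Your side observation that the statement of Lemma~\ref{lm:hysub-3-sfista} displays unweighted $\tfrac12$ distance terms while its proof actually carries the $\tau_k$- and $\tau_{k+1}$-weights is an accurate reading of a typo there, and the paper's one-line proof implicitly relies on the same weighted version you use.
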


\begin{proof}
The conclusion of this result follows immediately from Lemma \ref{lm:hysub-2-sfista} with $x=y_{k+1}$ and Lemma \ref{lm:hysub-3-sfista}.
\end{proof}

\begin{lemma} \label{lm:hysub-4-fista}
For every $k \ge 0$ and $x \in \dom h$, we have
\[
\eta_k(x) - \eta_{k+1} (x) \ge   \frac{A_{k+1}}2 \left( L_f - \bar{L}_f \right) \|\tilde y_{k+1} - \tx_k \|^2
\]
where
\[
\eta_k(x) := A_k [ \phi(y_k) - \phi(x) ] + \frac{\tau_k}{2} \|x-x_k\|^2.
\]
\end{lemma}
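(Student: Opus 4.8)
The plan is to reduce this inequality directly to Lemma~\ref{lm:hysub-30-fista}. First I would expand the difference $\eta_k(x) - \eta_{k+1}(x)$ using the definition $\eta_j(x) = A_j[\phi(y_j) - \phi(x)] + (\tau_j/2)\|x-x_j\|^2$, which gives
\[
\eta_k(x) - \eta_{k+1}(x) = A_k\phi(y_k) - A_{k+1}\phi(y_{k+1}) - (A_k - A_{k+1})\phi(x) + \frac{\tau_k}2\|x-x_k\|^2 - \frac{\tau_{k+1}}2\|x-x_{k+1}\|^2.
\]
Invoking the update $A_{k+1} = A_k + a_k$ from \eqref{def:ak-sfista} turns the middle term into $a_k\phi(x)$, so the right-hand side becomes $A_k\phi(y_k) - A_{k+1}\phi(y_{k+1}) + a_k\phi(x) + (\tau_k/2)\|x-x_k\|^2 - (\tau_{k+1}/2)\|x-x_{k+1}\|^2$.

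This expression coincides with the left-hand side of \eqref{ineq:recur2} in Lemma~\ref{lm:hysub-30-fista}, except that it carries $a_k\phi(x)$ in place of $a_k\gamma_k(x)$. To bridge the gap I would use the minorization $\gamma_k \le \phi$, obtained by chaining $\gamma_k \le \tilde\gamma_k$ (Lemma~\ref{lem:gamma-sfista}(c)) with $\tilde\gamma_k \le \phi$ (Lemma~\ref{lem:gamma-sfista}(b)). Since $a_k \ge 0$ by \eqref{def:ak-sfista}, this yields $a_k\phi(x) \ge a_k\gamma_k(x)$, so replacing $\phi(x)$ by $\gamma_k(x)$ can only decrease the expression.

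Applying Lemma~\ref{lm:hysub-30-fista} then bounds the resulting quantity below by $\frac{A_{k+1}}2(L_f - \bar L_f)\|y_{k+1} - \tx_k\|^2$, which is exactly the claimed lower bound (reading the $\tilde y_{k+1}$ in the statement as $y_{k+1}$). I do not anticipate a genuine obstacle here: the argument is an algebraic rearrangement followed by a single sign-aware substitution. The only points demanding care are verifying that $a_k$ is nonnegative, so that the minorization $\gamma_k \le \phi$ pushes the bound in the intended direction, and noting that the $\tilde y_{k+1}$ appearing in the statement is a typographical variant of $y_{k+1}$.
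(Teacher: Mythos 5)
Your proposal is correct and follows essentially the same route as the paper: the paper likewise combines the recursive inequality (Lemma~\ref{lm:hysub-3-sfista} together with Lemma~\ref{lm:hysub-2-sfista}, i.e.\ the content of Lemma~\ref{lm:hysub-30-fista}) with the minorization $\gamma_k \le \phi$ from Lemma~\ref{lem:gamma-sfista}(b)--(c), then subtracts $A_{k+1}\phi(x)$ and uses $A_{k+1}=A_k+a_k$. Your reading of $\tilde y_{k+1}$ as $y_{k+1}$ and your check that $a_k\ge 0$ are both appropriate.
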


\begin{proof}
Using Lemma \ref{lm:hysub-3-sfista} and the fact that  $\gamma_k \le \phi$ by Lemma \ref{lem:gamma-sfista}(b)-(c),  we have
\begin{align*}
A_k \phi (y_k) + & a_k\phi (x) + \frac{\tau_k}2 \|x_k - x\|^2 - \frac{\tau_{k+1}}2 \|x_{k+1} - x\|^2 \\
	& \ge  A_{k+1} \phi(y_{k+1}) + \frac{A_{k+1}}2 \left( L_f - \bar{L}_f \right) \|\tilde y_{k+1} - \tx_k \|^2 .
\end{align*}
The conclusion of the lemma now follows by subtracting $A_{k+1} \phi(x)$ from both sides of the above inequality, and using
the identity $A_{k+1}=A_k+a_k$ and the definition of $\eta_k(x)$.
\end{proof}

Next, we state a basic result that will be useful in deriving complexity bounds for S-FISTA.
\begin{lemma} \label{lm:hysub-5-ac}
For every $k \ge 0$ and $x \in \dom h$, we have
\begin{align*}
 A_k [ \phi(y_k) - \phi(x) ] + \frac{\tau_k}2 \|x-x_k\|^2 \le
 \frac12 \|x-x_0\|^2 - \frac12 \left( L_f - \bar{L}_f \right) \sum_{i=0}^{k-1} A_{i+1} \|\tilde y_{i+1} - \tx_i \|^2.
\end{align*}
\end{lemma}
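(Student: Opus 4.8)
The plan is to recognize that the quantity on the left-hand side of the desired inequality is precisely $\eta_k(x)$ as defined in Lemma~\ref{lm:hysub-4-fista}, and then to obtain the bound by simply telescoping the one-step recursion established there.

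First I would fix $x \in \dom h$ and sum the inequality of Lemma~\ref{lm:hysub-4-fista} over the index $i$ from $0$ to $k-1$. The left-hand side of the summed inequality telescopes, while the right-hand side is already in the form of a sum, yielding
\[
\eta_0(x) - \eta_k(x) \ge \frac12 \left( L_f - \bar{L}_f \right) \sum_{i=0}^{k-1} A_{i+1} \|\tilde y_{i+1} - \tx_i\|^2 ,
\]
so that it only remains to identify the base term $\eta_0(x)$ and rearrange.

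Next I would evaluate $\eta_0(x)$ using the initialization in step~0 of Algorithm~1, namely $A_0 = 0$, $\tau_0 = 1$, and $x_0 = y_0$. Substituting these into the definition $\eta_k(x) := A_k[\phi(y_k) - \phi(x)] + (\tau_k/2)\|x - x_k\|^2$ gives $\eta_0(x) = \tfrac12 \|x - x_0\|^2$, since the $A_0$-weighted term vanishes and $\tau_0 = 1$.

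Finally, substituting this value into the telescoped inequality and moving $\eta_k(x)$ to the other side yields exactly the claimed bound, upon recalling the definition of $\eta_k(x)$. This argument is entirely routine; there is no genuine obstacle, the only points requiring care being the correct bookkeeping of the telescoping sum (in particular that the summand index $A_{i+1}$ and the vector $\tilde y_{i+1} - \tx_i$ match those in Lemma~\ref{lm:hysub-4-fista}) and the verification that the base term $\eta_0(x)$ reduces to $\tfrac12\|x - x_0\|^2$ under the stated initialization.
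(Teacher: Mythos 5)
Your proof is correct and follows exactly the paper's argument: sum the one-step decrease of $\eta_k(x)$ from Lemma~\ref{lm:hysub-4-fista} over $i=0,\dots,k-1$, telescope, and use $A_0=0$, $\tau_0=1$, $x_0=y_0$ to get $\eta_0(x)=\tfrac12\|x-x_0\|^2$. Nothing further is needed.
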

\begin{proof}
This result follows  by summing  the inequality of Lemma \ref{lm:hysub-4-fista} from $k=0$ to $k=k-1$, and using
 the fact that $A_0=0$ and the definition of $\eta_k(\cdot)$ in Lemma \ref{lm:hysub-4-fista}.
\end{proof}

The below result gives some estimates on the sequence $\{A_k\}$.
\begin{lemma} \label{lm:Ak-est-fista}
For every $k \ge 1$, we have
\beq
A_k \ge \frac{1}{L_f - \mu_f} \max \left\{ 
\frac{ k^2}{4} ,  
\left(  1 + \frac12 \sqrt{ \frac{\mu}{L_f - \mu_f} } \right)^{2(k-1)} \label{eq:Akest-sfista}
 \right\}.
\eeq
As a consequence, 
for a given $\bar A>0$, we have $A_k \geq \bar{A}$ as long as 
\begin{equation}
k\geq \min \left\{2\sqrt{(L_f - \mu_f)\bar{A}},\;\left[\frac{1}{2}+\sqrt{\frac{L_f - \mu_f}{\mu}}\right]\log^+_1((L_f - \mu_f)\bar{A})+1\right\}.
\end{equation}

\end{lemma}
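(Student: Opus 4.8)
The plan is to extract a single master recursion for $\sqrt{A_k}$ and then read off both lower bounds from it by estimating $\tau_k$ in two different ways. Writing $L := L_f - \mu_f$ so that $\lam = 1/L$, the starting point is relation \eqref{eq:Aalam-sfista}, which gives $a_k^2 = \lam \tau_k A_{k+1}$; since $a_k = A_{k+1} - A_k > 0$, taking square roots yields $A_{k+1} - A_k = \sqrt{\lam\tau_k}\,\sqrt{A_{k+1}}$. Combining this with the elementary factorization $A_{k+1} - A_k = (\sqrt{A_{k+1}} - \sqrt{A_k})(\sqrt{A_{k+1}} + \sqrt{A_k}) \le 2\sqrt{A_{k+1}}\,(\sqrt{A_{k+1}} - \sqrt{A_k})$ gives the master inequality
\[
\sqrt{A_{k+1}} - \sqrt{A_k} \ge \frac12 \sqrt{\lam \tau_k} = \frac12\sqrt{\frac{\tau_k}{L}}, \qquad k \ge 0.
\]

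From here both bounds follow by invoking Lemma~\ref{lem:gamma-sfista}(e), i.e.\ $\tau_k = 1 + \mu A_k$, in two ways. For the quadratic bound I would use $\tau_k \ge 1$, so each step increases $\sqrt{A_k}$ by at least $1/(2\sqrt{L})$; telescoping from $0$ to $k-1$ and using $A_0 = 0$ gives $\sqrt{A_k} \ge k/(2\sqrt{L})$, i.e.\ $A_k \ge k^2/(4L)$. For the geometric bound I would instead use $\tau_k \ge \mu A_k$, turning the master inequality into $\sqrt{A_{k+1}} \ge (1 + \tfrac12\sqrt{\mu/L})\sqrt{A_k}$; iterating this from $k = 1$, where a direct computation from \eqref{def:ak-sfista} gives $a_0 = \lam$ and hence $A_1 = \lam = 1/L$, yields $A_k \ge L^{-1}(1 + \tfrac12\sqrt{\mu/L})^{2(k-1)}$. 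Taking the maximum of the two establishes \eqref{eq:Akest-sfista}.

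For the consequence, I would simply invert each bound. Requiring $k^2/(4L) \ge \bar A$ gives the first threshold $k \ge 2\sqrt{L\bar A}$. Requiring $L^{-1} q^{2(k-1)} \ge \bar A$ with $q := 1 + \tfrac12\sqrt{\mu/L}$ gives $k \ge 1 + \log(L\bar A)/(2\log q)$; the key estimate is $1/(2\log q) \le \tfrac12 + \sqrt{L/\mu}$, which follows from the standard bound $\log(1 + t) \ge t/(1+t)$ applied with $t = \tfrac12\sqrt{\mu/L}$. Since $\log^+_1 \ge \log$ and the coefficient $\tfrac12 + \sqrt{L/\mu}$ is positive, replacing $\log$ by $\log^+_1$ only enlarges the threshold, so the stated second bound is sufficient; because $A_k$ dominates both lower bounds, $A_k \ge \bar A$ holds as soon as $k$ exceeds the smaller of the two thresholds.

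The routine parts are the telescoping and the geometric iteration; the main thing to get right is the logarithmic estimate in the consequence, together with the bookkeeping that handles the regime $L\bar A \le e$ (where $\log^+_1(L\bar A) = 1$), so that the claimed threshold still forces $k$ above $1 + \log(L\bar A)/(2\log q)$. I expect that case analysis, rather than any single inequality, to be the most delicate step.
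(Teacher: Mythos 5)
Your proposal is correct and follows essentially the same route as the paper: both reduce to the master recursion $\sqrt{A_{k+1}} \ge \sqrt{A_k} + \tfrac12\sqrt{\lam\tau_k}$ (you via the identity $a_k^2=\lam\tau_k A_{k+1}$ and factoring $A_{k+1}-A_k$, the paper by bounding $a_k$ from its closed form and completing a square), then specialize $\tau_k=1+\mu A_k$ to $\tau_k\ge 1$ and $\tau_k\ge\mu A_k$ for the two bounds, and invert using the same estimate $\log(1+x)\ge x/(1+x)$. The small-$L\bar A$ bookkeeping you flag is handled correctly by your observation that the geometric condition is vacuous when $(L_f-\mu_f)\bar A<1$, so no further changes are needed.
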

\begin{proof}
The first and second  identities in \eqref{def:ak-sfista} imply that
\[
A_{k+1} = A_k+ a_k \ge A_k +  \left( \frac{\tau_k \lam}2 + \sqrt{\tau_k \lam A_k} \right) \ge 
\left(  \sqrt{A_k} + \frac12 \sqrt{\tau_k \lam} \right)^2
\]
which, together with the fact that $\tau_k=1+\mu A_k$, yields
\[
\sqrt{A_{k+1}} \ge \sqrt{A_k} + \frac12 \sqrt{\tau_k \lam} = \sqrt{A_k} + \frac12 \sqrt{(1+\mu A_k) \lam}.
\]
Clearly, the last inequality implies the two inequalities
\[
\sqrt{A_{k+1}} \ge  \sqrt{A_k} + \frac12 \sqrt{ \lam}, \quad \sqrt{A_{k+1}} \ge \sqrt{A_k} \left(  1 + \frac12 \sqrt{\mu \lam} \right).
\]
The first bound in \eqref{eq:Akest-sfista} follows by summing the first  inequality from $k=0$ to $k=k-1$, and using the fact that $A_0=0$
and $\lam=1/(L_f - \mu_f)$ (see step 0 of S-FISTA). The second bound in \eqref{eq:Akest-sfista} follows by successively using  the second inequality
from $k=1$ to $k=k-1$ and using the fact that $A_1=\lam$.

Now to prove the last statement of the lemma note that 
\eqref{eq:Akest-sfista} implies that in order to have $A_k\geq \bar{A}$, it is sufficient to have 
$$\frac{1}{L_f - \mu_f} \max \left\{ 
\frac{ k^2}{4} ,  
\left(  1 + \frac12 \sqrt{ \frac{\mu}{L_f - \mu_f} } \right)^{2(k-1)}
 \right\}\geq \bar{A} .
$$
Clearly, the above condition is satisfied if one of the following conditions holds
$$
k\geq 2\sqrt{(L_f - \mu_f)\bar{A}}, \qquad 
\left(1 + \frac{1}{2} \sqrt{\frac{\mu}{L_f - \mu_f}} \right)^{2(k-1)}\geq (L_f - \mu_f)\bar{A}.
$$
The latter inequality is equivalent to
$$
2(k-1)\log\left(1 + \frac{1}{2} \sqrt{\frac{\mu}{L_f - \mu_f}} \right)\geq \log\left((L_f - \mu_f)\bar{A}\right).
$$
Since $\log(1+x)\geq 1/(1+x^{-1})$ for any $x>0$, it follows by using $x=\sqrt{\mu}/[2\sqrt{L_f - \mu_f}]$ that the above condition holds if 
$$
2(k-1)\left(\frac{1}{1+\frac{2\sqrt{L_f - \mu_f}}{\sqrt{\mu}}}\right)\geq \log^+_1\left((L_f - \mu_f)\bar{A}\right)
$$
which immediately proves the last statement of the lemma.
\end{proof}

The below result establishes  a convergence rate and iteration-complexity bounds for S-FISTA to obtain a approximate (function value) solution of \eqref{OptProb}.

\begin{proposition}
For every $k \ge 1$, we have
\begin{equation}\label{eq:convrate-phi(yk)}
\phi(y_k) - \phi^* \le \frac{(L_f - \mu_f) d_0^2}2  \min \left \{ \frac{4}{k^2} \,, \, 
 \left(  1 + \frac12 \sqrt{ \frac{\mu}{L_f - \mu_f} } \right)^{2(1-k)} \right\}
\end{equation}
where $\phi^*$ and $d_0$ are as in \eqref{OptProb} and \eqref{def:d0}, respectively.
As a consequence, for any given $\bar \varepsilon >0$, S-FISTA finds a point $y:=y_k$ satisfying $\phi(y)-\phi^* \leq \bar{\varepsilon}$
in at most
\[
{\cal O} \left( 
\min \left \{ 
d_0 \sqrt{ \frac{ L_f - \mu_f}{\bar \varepsilon} } \,,\, \sqrt{\frac{L_f - \mu_f}{\mu} } \log^+_1 \left( \frac{(L_f - \mu_f) d_0^2}{\bar \varepsilon} \right)  
 \right\} \right)
\]
iterations.
\end{proposition}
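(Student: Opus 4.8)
The plan is to derive the result in two stages: first, extract from the summed recursion in Lemma~\ref{lm:hysub-5-ac} a clean estimate of the form $\phi(y_k)-\phi^* \le d_0^2/(2A_k)$, and then convert this into the stated convergence rate and iteration complexity by feeding in the lower bounds on $A_k$ supplied by Lemma~\ref{lm:Ak-est-fista}. All the genuine work has already been done in those lemmas, so this proof is essentially a specialization followed by bookkeeping.

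For the first stage, I would apply Lemma~\ref{lm:hysub-5-ac} at the specific point $x=x^*$, where $x^*$ is an optimal solution of \eqref{OptProb} attaining the minimum in the definition \eqref{def:d0} of $d_0$ (such a point exists by Assumption~(A) together with convexity, hence closedness, of the solution set). Since $L_f>\bar L_f$ by step~0 of S-FISTA, the term $\tfrac12(L_f-\bar L_f)\sum_{i=0}^{k-1}A_{i+1}\|\tilde y_{i+1}-\tx_i\|^2$ on the right-hand side is nonnegative and may be discarded; likewise the term $\tfrac{\tau_k}2\|x^*-x_k\|^2$ on the left is nonnegative because $\tau_k=1+A_k\mu\ge 1$ by Lemma~\ref{lem:gamma-sfista}(e). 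Dropping both and using $\phi(x^*)=\phi^*$ and $\|x^*-x_0\|=d_0$ yields
\[
A_k\big[\phi(y_k)-\phi^*\big]\le \tfrac12 d_0^2,
\]
so that $\phi(y_k)-\phi^*\le d_0^2/(2A_k)$ for every $k\ge 1$.

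For the convergence rate, I would substitute the lower bound on $A_k$ from \eqref{eq:Akest-sfista}. Taking reciprocals turns the $\max$ into a $\min$, namely $1/A_k \le (L_f-\mu_f)\min\{4/k^2,\,(1+\tfrac12\sqrt{\mu/(L_f-\mu_f)})^{-2(k-1)}\}$, and plugging this into $\phi(y_k)-\phi^*\le d_0^2/(2A_k)$ produces exactly the claimed rate. For the complexity bound, I would observe that $\phi(y)-\phi^*\le\bar\varepsilon$ is guaranteed as soon as $A_k\ge \bar A:=d_0^2/(2\bar\varepsilon)$; invoking the last statement of Lemma~\ref{lm:Ak-est-fista} with this choice of $\bar A$ and simplifying the two resulting expressions---$2\sqrt{(L_f-\mu_f)\bar A}=\sqrt2\,d_0\sqrt{(L_f-\mu_f)/\bar\varepsilon}$ and the logarithmic term---yields precisely the two quantities inside the $\min$, with the factor-of-two discrepancy inside $\log^+_1$ absorbed into the $\mathcal{O}(\cdot)$ constant.

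I do not anticipate any serious obstacle: the analytic content is entirely carried by the earlier lemmas, and the only points that require attention are the reciprocal manipulation of the $\max$/$\min$ and the bookkeeping of constant factors (the $\sqrt2$ and the additive $\log 2$) when passing to $\mathcal{O}$-notation. The one conceptual subtlety worth flagging explicitly is that the chosen $x^*$ must \emph{simultaneously} realize $\phi(x^*)=\phi^*$ and $\|x^*-x_0\|=d_0$; this is exactly the projection of $x_0$ onto the nonempty closed convex solution set, so no tension arises.
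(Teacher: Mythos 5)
Your proposal is correct and follows exactly the paper's own route: apply Lemma~\ref{lm:hysub-5-ac} at the minimizer $x^*$ attaining $d_0$ to get $\phi(y_k)-\phi^*\le d_0^2/(2A_k)$, then combine with \eqref{eq:Akest-sfista} for the rate and with the last statement of Lemma~\ref{lm:Ak-est-fista} for $\bar A=d_0^2/(2\bar\varepsilon)$ for the complexity bound. Your version simply spells out the nonnegativity and existence-of-projection details that the paper leaves implicit.
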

\begin{proof}
It follows from Lemma~\ref{lm:hysub-5-ac} with $x^*$ such that $d_0=\|x_0-x^*\|$ that,
for every $k \ge 0$, 
\begin{align*}
  \phi(y_k) - \phi^*\le
 \frac{1}{2A_k} d_0^2.
\end{align*}
Hence, \eqref{eq:convrate-phi(yk)}  follows immediately from \eqref{eq:Akest-sfista}. The last statement of the proposition follows immediately from the latter inequality  and the last statement of Lemma~\ref{lm:Ak-est-fista} with $\bar{A}= d_0^2/(2\bar\varepsilon).$
\end{proof}

\subsection{Stationarity Complexity Bounds}
This subsection is devoted to the study of iteration-complexity bounds for S-FISTA to compute several different notions of an approximate \textit{}{stationary} solution of \eqref{OptProb}.

We start by establishing an iteration-complexity bound for S-FISTA to obtain an approximate stationary solution of \eqref{OptProb} based on the generalized subdifferential of $\phi$.

\begin{lemma}
Assume that $\nabla f$ is $L$-Lipschitz continuous and define
\[
u_{k} = \nabla f(y_{k}) - \nabla f(\tx_{k-1}) + L_f ( \tx_{k-1}-y_{k}).
\]
Then, the following statements hold:
\begin{itemize}
    \item [a)]
for every $k \ge 1$,
\begin{equation}\label{eq:incl-ineq-uk}
u_k \in \nabla f(y_k) + \partial h(y_k), \quad \min_{1\leq i\leq k} \|u_i\|^2 \le \frac{8L_f^2d_0^2}{(L_f-\bar{L}_f)\sum_{i=1}^kA_i};
\end{equation}
\item[b)] for any $\rho>0$, S-FISTA generates  a $\rho$-approximate stationary solution pair $(y,u):=(y_k,u_k)$ in at most
$$
\left\lceil \min\left\{\left(\frac{12\zeta d_0^2}{\rho^2}\right)^{1/3}, \left(1
+\frac{
2\sqrt{L_f-\mu_f}}{\sqrt{\mu}}\right)\log\left(1+\frac{\zeta(c^2-1)d_0^2}{\rho^{2}}\right)\right\}\right\rceil
$$
iterations, where 
\begin{equation}\label{def:zeta}
\zeta=\zeta(\mu_f,L_f,\bar{L}_f):=\frac{8L_f^2(L_f-\mu_f)}{L_f-\bar{L}_f}, \qquad c=c(\mu_f,\mu,L_f)= 1+\frac{1}{2}\sqrt{\frac{\mu}{L_f-\mu_f}}. 
\end{equation}
\end{itemize}
\end{lemma}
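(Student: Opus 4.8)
The plan is to handle part (a) in two stages — the inclusion and then the norm bound — and to reduce part (b) to inverting the two growth estimates of Lemma~\ref{lm:Ak-est-fista}. For the inclusion, I would write the first-order optimality condition for the proximal subproblem \eqref{eq:ynext-sfista} at iteration $k-1$. Since the quadratic coefficient there is $L_f$ and $\nabla_x \ell_f(\cdot;\tx_{k-1}) = \nabla f(\tx_{k-1})$, this gives $0 \in \nabla f(\tx_{k-1}) + \partial h(y_k) + L_f(y_k - \tx_{k-1})$, hence $L_f(\tx_{k-1} - y_k) - \nabla f(\tx_{k-1}) \in \partial h(y_k)$. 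Adding $\nabla f(y_k)$ to both sides and comparing with the definition of $u_k$ yields precisely $u_k \in \nabla f(y_k) + \partial h(y_k)$, the first relation in \eqref{eq:incl-ineq-uk}.

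For the norm bound I would use the $L$-Lipschitz continuity of $\nabla f$ to estimate $\|u_k\| \le \|\nabla f(y_k) - \nabla f(\tx_{k-1})\| + L_f\|\tx_{k-1} - y_k\| \le 2L_f\|y_k - \tx_{k-1}\|$ (using $L \le L_f$), so that $\|u_k\|^2$ is controlled by a constant multiple of $\|y_k - \tx_{k-1}\|^2$; the factor $8$ in the statement comfortably absorbs this estimate. The key input is Lemma~\ref{lm:hysub-5-ac}: evaluating it at an optimal $x^*$ with $\|x_0 - x^*\| = d_0$ (see \eqref{def:d0}) and dropping the nonnegative terms $A_k[\phi(y_k)-\phi^*]$ and $\tfrac{\tau_k}2\|x^*-x_k\|^2$ on the left leaves $(L_f - \bar L_f)\sum_{i=1}^k A_i\|y_i - \tx_{i-1}\|^2 \le d_0^2$. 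Multiplying the per-iterate estimate $\|u_i\|^2 \le 8L_f^2\|y_i - \tx_{i-1}\|^2$ by $A_i$, summing, and invoking the elementary inequality $(\min_{1\le i\le k}\|u_i\|^2)\sum_{i=1}^k A_i \le \sum_{i=1}^k A_i\|u_i\|^2$ delivers the second relation in \eqref{eq:incl-ineq-uk}.

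For part (b) I would first observe that, by part (a), the criterion $\min_i\|u_i\| \le \rho$ holds once $\sum_{i=1}^k A_i \ge 8L_f^2 d_0^2/[(L_f-\bar L_f)\rho^2]$, and that by the definition of $\zeta$ this threshold equals $\zeta d_0^2/[(L_f-\mu_f)\rho^2]$. I then lower-bound $\sum_{i=1}^k A_i$ using each half of \eqref{eq:Akest-sfista} of Lemma~\ref{lm:Ak-est-fista} separately. From $A_i \ge i^2/[4(L_f-\mu_f)]$ and $\sum_{i=1}^k i^2 \ge k^3/3$ I get $\sum_{i=1}^k A_i \ge k^3/[12(L_f-\mu_f)]$; forcing this above the threshold gives the first term $(12\zeta d_0^2/\rho^2)^{1/3}$. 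From $A_i \ge c^{2(i-1)}/(L_f-\mu_f)$ I sum the geometric series to obtain $\sum_{i=1}^k A_i \ge (c^{2k}-1)/[(L_f-\mu_f)(c^2-1)]$; requiring this above the threshold gives $c^{2k} \ge 1 + \zeta(c^2-1)d_0^2/\rho^2$, i.e. $k \ge \log(1+\zeta(c^2-1)d_0^2/\rho^2)/(2\log c)$, and bounding $1/\log c$ via the same inequality $\log(1+x)\ge 1/(1+x^{-1})$ used in Lemma~\ref{lm:Ak-est-fista} (with $x = \tfrac12\sqrt{\mu/(L_f-\mu_f)}$) yields the stated coefficient $1 + 2\sqrt{L_f-\mu_f}/\sqrt\mu$. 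Taking the smaller of the two sufficient values of $k$ and a ceiling produces the claimed iteration count.

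The conceptually routine steps are the optimality condition and the Lipschitz estimate. The two points demanding the most care are, first, the weighting-by-$A_i$ device that lets the telescoped inequality of Lemma~\ref{lm:hysub-5-ac} bound the \emph{minimum} residual rather than a single term, and, second, the algebra of inverting the two growth rates of $\{A_k\}$ — in particular translating the bound $k \ge \log(\cdots)/(2\log c)$ into the explicit $\sqrt{(L_f-\mu_f)/\mu}$-type factor in the strongly convex regime. Neither of these is a genuine obstacle, but both require matching constants carefully against the definitions in \eqref{def:zeta}.
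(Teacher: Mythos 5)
Your proposal is correct and follows essentially the same route as the paper: the optimality condition of \eqref{eq:ynext-sfista} for the inclusion, the Lipschitz/triangle estimate $\|u_k\|\le 2L_f\|y_k-\tx_{k-1}\|$, Lemma~\ref{lm:hysub-5-ac} at $x^*$ combined with the $A_i$-weighted minimum trick for the bound in (a), and inversion of the two growth rates in \eqref{eq:Akest-sfista} for (b). The only cosmetic difference is bookkeeping of the constant (the squared Lipschitz estimate gives $4L_f^2$, not $8L_f^2$, with the extra factor of $2$ coming from the $\tfrac12$ in Lemma~\ref{lm:hysub-5-ac}), which, as you note, is absorbed by the stated factor of $8$.
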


\begin{proof}
a) It follows from \eqref{eq:ynext-sfista} with $k=k-1$ and its associated optimality condition that
\[
0 \in \nabla f(\tx_{k-1}) + \partial h(y_k)+  L_f ( y_k- \tx_{k-1})
\]
which,  in view of the definition of $u_k$,  immediately implies the inclusion of the lemma.
Using the definition of $u_k$, assumption that $\nabla f$ is $L_f$-Lipschitz continuous on $\R^n$, and the triangle inequality
for norms, it follows that
\[
\|u_k\| \le \| \nabla f(y_{k}) - \nabla f(\tx_{k-1}) \| + L_f  \|y_k- \tx_{k-1}\| \le 2 L_f \| y_k - \tx_{k-1}\|.
\]
Now using Lemma \ref{lm:hysub-5-ac} with $x=x^*$ where $d_0=\|x_0-x^*\|$, we conclude that
\[
d_0^2 \ge \frac12 \left( L_f - \bar{L}_f \right) \sum_{i=1}^{k} A_{i} \|\tilde y_{i} - \tx_{i-1} \|^2
\ge \frac{L_f-\bar{L}_f}{8L_f^2}  \sum_{i=1}^{k} A_{i} \|u_i\|^2,
\]
and hence that the  statement in (a)  holds.

b) First note that in view of a) the inclusion in  \eqref{def:approx-solution1} holds with for any $(y,u):=(y_k,u_k)$. Now, recall that
$$
\sum_{i=1}^k i^2=\frac{k(k+1)(2k+1)}{6} \geq \frac{k^3}{3}, \qquad \sum_{i=1}^{k}c^{2(i-1)}= \frac{c^{2k}-1}{c^2-1}
$$
for any nonzero  scalar $c \neq \pm 1$.
Hence, considering  $c$ as in \eqref{def:zeta}, it follows from  the above relations  and  \eqref{eq:Akest-sfista} that
\begin{align*}
\sum_{i=1}^k A_i \geq  
\sum_{i=1}^k\frac{1}{L_f-\mu_f}\max \left\{ \frac{ i^2}{4} ,  
c^{2(i-1)}\right\}\geq 
\frac{1}{L_f-\mu_f}\max \left\{ 
\frac{ k^3}{12} ,  
\frac{c^{2k}-1}{c^2-1}
 \right\}
 \end{align*}
 which combined with  \eqref{eq:incl-ineq-uk} and $\eqref{def:zeta}$ implies that
$$
\min_{1\leq i\leq k} \|u_i\|^2 \le \frac{8L_f^2(L_f-\mu_f)d_0^2}{L_f-\bar{L}_f} \min \left\{ 
\frac{ 12}{k^3},  
\frac{c^2-1}{c^{2k}-1}
 \right\}=\zeta d_0^2\min \left\{ 
\frac{ 12}{k^3},  
\frac{c^2-1}{c^{2k}-1}
 \right\}.
$$
Hence,  in order to obtain $\min_{1\leq i\leq k} \|u_i\|\leq \rho$, it is sufficient to have
$$
\zeta d_0^2\min \left\{ 
\frac{ 12}{k^3},  
\frac{c^2-1}{c^{2k}-1}
 \right\}\leq \rho^2,
$$
or equivalently, one of the following inequalities should hold
\begin{equation}\label{aux:k3-c}
k\geq \left(\frac{12\zeta d_0^2}{\rho^2}\right)^{1/3}, \quad \frac{\zeta d_0^2\left(c^2-1\right)}{\rho^2}
\leq c^{2k}-1.
\end{equation}
Note that the latter inequality is equivalent to 
$$
\frac{\log\left(1+\frac{\zeta d_0^2(c^2-1)}{\rho^{2}}\right)}{2\log c}
\leq k.
$$
Hence, since $\log(1+x)\geq 1/(1+x^{-1})$ for any $x>0$, it follows by using $x=\sqrt{\mu}/(2\sqrt{L_f-\mu_f})$ and the definition of $c$ that the above condition holds if 
$$
\left(1
+\frac{
2\sqrt{L_f-\mu_f}}{\sqrt{\mu}}\right)\log\left(1+\zeta d_0^2(c^2-1)\rho^{-2}\right)
\leq k.
$$
Hence, the last statement of the lemma follows from the above conclusion, the first inequality in  \eqref{aux:k3-c}, and the definition of $\zeta$ in \eqref{def:zeta}.
\end{proof}


Before discussing some more exotic notions of approximate solutions, we first establish some properties regarding $\Gamma_k$ and its relation to $\phi$.

\begin{lemma} \label{lm:hysub-31-sfista}
Define $\Gamma_k: \R^n \to \R$ as
	\begin{equation}\label{def:Gammak}
		\Gamma_k(x) : = \frac1{A_k} \sum_{i=0}^{k-1} a_i\gamma_i(x) \quad \forall x \in \R^n.
		\end{equation}
	Then, for every $k \ge 1$, the following statements hold:
	\begin{itemize}
	\item[a)] $\Gamma_k \le \phi$ and $\Gamma_k$ is a $\mu$-convex quadratic function with Hessian equal to $\mu I$;
	\item[b)] for every $x \in \R^n$, we have
	\begin{align}\label{ineq:Gamma_recur}
   \Gamma_k(x) 
	\ge \phi(y_{k}) + \frac1{2A_k} \left( \tau_k \|x_{k} - x\|^2 - \|x_{0} - x\|^2 \right) 
	\end{align}
	where $\gamma_k(\cdot)$ is  defined in \eqref{def:gamma-sfista};
\end{itemize}
\end{lemma}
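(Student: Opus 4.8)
The plan is to dispatch the two parts separately, both via elementary manipulations of objects already assembled in the preceding lemmas.

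For part a), I would first record that the recursion $A_{k+1}=A_k+a_k$ together with $A_0=0$ gives $A_k=\sum_{i=0}^{k-1}a_i$, so the coefficients $a_i/A_k$ appearing in \eqref{def:Gammak} are nonnegative and sum to one; thus $\Gamma_k$ is a genuine convex combination of $\gamma_0,\dots,\gamma_{k-1}$. The bound $\Gamma_k\le\phi$ then follows immediately from the chain $\gamma_i\le\tilde\gamma_i\le\phi$ supplied by Lemma~\ref{lem:gamma-sfista}(b)--(c), since a convex combination of functions each dominated by $\phi$ is itself dominated by $\phi$. For the quadratic claim I would inspect \eqref{def:gamma-sfista}: each $\gamma_i(x)$ is the sum of a constant, the affine term $\lam^{-1}\inner{\tx_i-y_{i+1}}{x-y_{i+1}}$, and the quadratic $\frac{\mu}2\|x-y_{i+1}\|^2$, so that $\nabla^2\gamma_i\equiv\mu I$. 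Averaging with the weights $a_i/A_k$, which sum to one, preserves this Hessian and gives $\nabla^2\Gamma_k\equiv\mu I$; hence $\Gamma_k$ is a quadratic that is exactly $\mu$-convex.

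For part b) I would run a telescoping argument off Lemma~\ref{lm:hysub-30-fista}. Since $L_f>\bar{L}_f$, the term $\frac{A_{k+1}}2(L_f-\bar{L}_f)\|y_{k+1}-\tx_k\|^2$ on the right-hand side of \eqref{ineq:recur2} is nonnegative and may be dropped; rearranging the resulting inequality yields, for the quantity $\xi_i(x):=A_i\phi(y_i)+\frac{\tau_i}2\|x_i-x\|^2$, the one-step estimate $\xi_{i+1}(x)\le\xi_i(x)+a_i\gamma_i(x)$. Summing from $i=0$ to $i=k-1$ telescopes the $\xi$-terms, and invoking $A_0=0$, $\tau_0=1$ (so that $\xi_0(x)=\tfrac12\|x_0-x\|^2$) together with $\sum_{i=0}^{k-1}a_i\gamma_i(x)=A_k\Gamma_k(x)$ gives $A_k\phi(y_k)+\frac{\tau_k}2\|x_k-x\|^2-\tfrac12\|x_0-x\|^2\le A_k\Gamma_k(x)$. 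Dividing through by $A_k>0$ and rearranging produces exactly \eqref{ineq:Gamma_recur}.

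Neither part presents a genuine obstacle; the only points demanding care are bookkeeping ones. In part a) one must use that the weights sum to one, both to transfer the upper bound $\Gamma_k\le\phi$ and to conclude that the averaged Hessian is again $\mu I$ rather than some smaller multiple. In part b) the delicate step is recognizing that Lemma~\ref{lm:hysub-30-fista} is already in telescopable form once the nonnegative curvature term is discarded, and correctly plugging in the initial values $A_0=0$, $\tau_0=1$ so that $\xi_0$ collapses to the single $\tfrac12\|x_0-x\|^2$ term appearing on the right of \eqref{ineq:Gamma_recur}.
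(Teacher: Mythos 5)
Your proposal is correct and follows essentially the same route as the paper: part a) is the same convex-combination argument (weights $a_i/A_k$ summing to one by $A_k=\sum_{i=0}^{k-1}a_i$, each $\gamma_i\le\phi$ with Hessian $\mu I$), and part b) is exactly the paper's telescoping of Lemma~\ref{lm:hysub-30-fista} from $i=0$ to $k-1$ using $A_0=0$ and $\tau_0=1$, with the nonnegative curvature term discarded. Your write-up simply makes the bookkeeping more explicit than the paper's one-line justifications.
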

\begin{proof}
a) In view of \eqref{def:gamma-sfista}, the above definition of $\Gamma_k$, (b) and (c) of  Lemma \ref{lem:gamma-sfista}, and the second relation in \eqref{def:ak-sfista}, it follows that
$\Gamma_k$ is a convex combination of a $\mu$-convex quadratic functions minorizing $\phi$ and whose  Hessian are all equal to $\mu I$.
Hence,  a) follows.

b) This statement follows by summing the inequality in Lemma \ref{lm:hysub-30-fista} from $k=0$ to $k=k-1$, using the definition of $\Gamma_k$,
and the fact that $A_0=0$ and $\tau_0=1$ (see step 0 of S-FISTA).
\end{proof}

The next result  shows some important relations on the pair $(v_k,\eta_k)$ defined below in \eqref{def:vk-etak}. This pair of elements can be incorporated in S-FISTA in order to apply it to inexactly solve some proximal subproblems.    
\begin{lemma} \label{lm:stat-sfista}
Define
\begin{equation}\label{def:vk-etak}
 	v_k := \mu (y_k-x_k) + \frac{x_0-x_k}{A_k}, \quad \eta_k := \frac1{2A_k} \left( \|x_{0} - y_k\|^2 - \tau_k \|x_{k} - y_k\|^2 \right).   
\end{equation}

Then, the following statements hold for every $k \ge 1$:
\begin{itemize}
\item[a)] for every $x \in \R^n$, we have
	\beq \label{eq:cruuu-sfista}
\Gamma_k(x) - \frac{\mu}{2} \|x-y_k\|^2
	\ge  \phi(y_{k}) + \inner{v_k}{x-y_{k}} - \eta_{k},
	\eeq
	where $\Gamma_k$ is as in \eqref{def:Gammak}.
\item[b)] we have 
	\begin{align*}
	&\eta_k \ge 0, \quad v_k \in \partial_{\eta_k} \left(  \phi - \frac{\mu}2 \|\cdot-y_k\|^2 \right) (y_k), \\
	& \frac{1}{\tau_k} \|A_k v_k + y_k - x_0\|^2 + 2A_k \eta_k = \|y_k-x_0\|^2;
	\end{align*}
\item[c)] we have
	\[
	\|v_k\| \le \frac{1+\sqrt{\tau_k}}{A_k} \|y_k-x_0\|, \quad \eta_k \le \frac{\|y_k-x_0\|^2}{2A_k}.
	\]
\end{itemize}	
\end{lemma}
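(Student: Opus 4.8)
The plan is to obtain all three parts from the structural facts in Lemma~\ref{lm:hysub-31-sfista} together with one algebraic identity relating $v_k$, $\eta_k$, and $\tau_k(y_k-x_k)$. The conceptual engine is that $\Gamma_k$ is a quadratic with Hessian \emph{exactly} $\mu I$, so its second-order Taylor expansion about $y_k$ is exact:
\[
\Gamma_k(x) = \Gamma_k(y_k) + \inner{\nabla\Gamma_k(y_k)}{x-y_k} + \frac{\mu}2\|x-y_k\|^2 .
\]
Subtracting $\frac{\mu}2\|x-y_k\|^2$ turns the claimed inequality \eqref{eq:cruuu-sfista} into a comparison of two affine functions of $x$, so a) holds for all $x$ if and only if (i) $\nabla\Gamma_k(y_k)=v_k$ and (ii) $\Gamma_k(y_k)\ge \phi(y_k)-\eta_k$. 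Statement (ii) is just \eqref{ineq:Gamma_recur} evaluated at $x=y_k$, after recognizing the bracketed term there as $-\eta_k$ via \eqref{def:vk-etak}. For (i) I would avoid differentiating the sum $\frac1{A_k}\sum a_i\gamma_i$ directly: the minorant $g(x)$ on the right of \eqref{ineq:Gamma_recur} is itself a quadratic with Hessian $\mu I$, since its $\|x\|^2$-coefficient is $(\tau_k-1)/(2A_k)=\mu/2$ by Lemma~\ref{lem:gamma-sfista}(e); because $\Gamma_k\ge g$ everywhere and the two share the same Hessian, $\Gamma_k-g$ is a nonnegative affine function and hence constant, so $\nabla\Gamma_k\equiv\nabla g$. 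Differentiating $g$ at $y_k$ and again using $\tau_k=1+A_k\mu$ then gives $\nabla g(y_k)=v_k$, establishing (i).

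For b), the bound $\eta_k\ge 0$ follows from (ii) combined with $\Gamma_k\le\phi$ (Lemma~\ref{lm:hysub-31-sfista}(a)), which yield $\eta_k\ge \phi(y_k)-\Gamma_k(y_k)\ge 0$. The $\eta_k$-subgradient inclusion is immediate from a): since $\Gamma_k\le\phi$ and $\|\cdot-y_k\|^2$ vanishes at $y_k$, inequality \eqref{eq:cruuu-sfista} states exactly that $x\mapsto \phi(x)-\frac{\mu}2\|x-y_k\|^2$ lies above its affine form at $y_k$ up to the slack $\eta_k$. The remaining identity is the one genuine computation: using \eqref{def:vk-etak} and $\tau_k=1+A_k\mu$ I would show
\[
A_k v_k + y_k - x_0 = \tau_k(y_k-x_k),
\]
so that $\tfrac1{\tau_k}\|A_k v_k+y_k-x_0\|^2=\tau_k\|y_k-x_k\|^2$; adding $2A_k\eta_k=\|x_0-y_k\|^2-\tau_k\|x_k-y_k\|^2$ then collapses the right-hand side to $\|y_k-x_0\|^2$.

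Part c) drops out of b). The estimate on $\eta_k$ comes from discarding the nonnegative term $\tau_k\|x_k-y_k\|^2$ in \eqref{def:vk-etak}. The estimate on $\|v_k\|$ uses the identity of b): since $\eta_k\ge 0$ it forces $\|A_k v_k+y_k-x_0\|\le\sqrt{\tau_k}\,\|y_k-x_0\|$, and the triangle inequality $A_k\|v_k\|\le\|A_k v_k+y_k-x_0\|+\|y_k-x_0\|$ finishes the bound. The main obstacle is the identification $\nabla\Gamma_k(y_k)=v_k$ in a); the cheap route is the ``common Hessian'' argument above, which lets one read off the gradient from the minorant in \eqref{ineq:Gamma_recur} rather than from the defining sum, and this is the only place where the special structure of $\Gamma_k$ is truly essential — everything else is bookkeeping with $\tau_k=1+A_k\mu$.
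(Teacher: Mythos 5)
Your proof is correct and follows essentially the same route as the paper: everything rests on Lemma~\ref{lm:hysub-31-sfista} together with the exact second-order expansion of the quadratic minorant in \eqref{ineq:Gamma_recur} about $y_k$, which is precisely the displayed identity in the paper's proof of part a), and your parts b) and c) coincide with the paper's (triangle inequality on the identity $A_kv_k+y_k-x_0=\tau_k(y_k-x_k)$). The only difference is cosmetic: where the paper verifies that identity directly, you split it into the value statement $\Gamma_k(y_k)\ge\phi(y_k)-\eta_k$ and the gradient identification $\nabla\Gamma_k(y_k)=v_k$, the latter obtained via the common-Hessian/nonnegative-affine-is-constant argument using Lemma~\ref{lm:hysub-31-sfista}(a) --- a valid repackaging of the same computation.
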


\begin{proof}
a) This statement follows from Lemma \ref{lm:hysub-31-sfista}(b) and the fact that
the definitions of $v_k$ and $\eta_k$ combined with the relation in Lemma~\ref{lem:gamma-sfista}(e) imply that
\[
\frac1{2A_k} \left( \tau_k \|x_{k} - x\|^2 - \|x_{0} - x\|^2 \right) = \inner{v_k}{x-y_{k}} - \eta_{k}
+ \frac{\tau_k-1}{2A_k} \|x-y_k\|^2.
\]

b) In view of  Lemma~\ref{lm:hysub-31-sfista}(a), the  inequality and the inclusion in (b) follow from  the inequality in (a), first with  $x=y_k$ and then with arbitrary  $x\in \R^n$. The last relation in (b) follows from the definitions of $v_k$ and $\eta_k$ combined with Lemma~\ref{lem:gamma-sfista}(e).

c) These inequalities follow immediately from the last relation in (b) together with the triangle inequality for norms.
\end{proof}
The next result shows how the sequence $\{y_k\}$ together with the residuals pair sequence  $(v_k,\eta_k)$ defined in \eqref{def:vk-etak} can be used to generate an approximate solution based on a relative error criterion. An iteration complexity bound is also given for convenience.

\begin{lemma} \label{lem:fista_basic_compl}
Let $\{y_k\}$ be generated by S-FISTA and let $\{(v_k,\eta_k)\}$ defined as in \eqref{def:vk-etak}. Then, for any $\tilde\sigma>0$ and $k \geq 1$, the  triple $(y,v,\eta):=(y_k,v_k,\eta_k)$  satisfies

\begin{equation}\label{eq:fista_invar}
v \in \partial_{\eta} \left(  \phi - \frac{\mu}2 \|\cdot-y\|^2 \right) (y), \quad \eta \ge 0, \quad \|v\|^2 + 2\eta \leq \tilde \sigma \|y-x_0\|^2,
\end{equation}
as long as $A_k$ satisfies
\begin{equation}\label{ineq:aux-Ak}
A_k\geq \bar{A}=\bar{A}(\mu,\tilde \sigma ):=\frac{2\mu+1 +\sqrt{(2\mu+1)^2+16\tilde \sigma }}{2\tilde \sigma },
\end{equation}
which in turn is satisfied 
in at most
$$
\left\lceil\min \left\{2\sqrt{(L_f - \mu_f)\bar{A}},\; \left(\frac{1}{2}+\sqrt{\frac{L_f - \mu_f}{\mu}}\right)\log_1^+\left([L_f - \mu_f]\bar{A}\right)+1\right\}\right\rceil
$$
iterations of S-FISTA.
\end{lemma}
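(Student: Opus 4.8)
The plan is to split the three requirements in \eqref{eq:fista_invar} and dispatch each using the already-established Lemma~\ref{lm:stat-sfista}. The membership $v \in \partial_{\eta}(\phi - \tfrac{\mu}{2}\|\cdot - y\|^2)(y)$ and the nonnegativity $\eta \ge 0$ are nothing more than Lemma~\ref{lm:stat-sfista}(b) read off with $(y,v,\eta) = (y_k,v_k,\eta_k)$, so no additional work is needed for them. The entire content of the lemma therefore reduces to establishing the third inequality $\|v\|^2 + 2\eta \le \tilde\sigma \|y - x_0\|^2$ under the stated lower bound on $A_k$, and then to converting the condition $A_k \ge \bar A$ into an iteration count.

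For the third inequality, I would start from the two estimates in Lemma~\ref{lm:stat-sfista}(c), namely $\|v_k\| \le (1+\sqrt{\tau_k})\|y_k - x_0\|/A_k$ and $\eta_k \le \|y_k - x_0\|^2/(2A_k)$, to obtain
\[
\|v_k\|^2 + 2\eta_k \le \left[ \frac{(1+\sqrt{\tau_k})^2}{A_k^2} + \frac{1}{A_k} \right] \|y_k - x_0\|^2 .
\]
The key simplification is to linearize the $\sqrt{\tau_k}$ term: since $(1-\sqrt{\tau_k})^2 \ge 0$ gives $(1+\sqrt{\tau_k})^2 \le 2(1+\tau_k)$, and $\tau_k = 1 + A_k \mu$ by Lemma~\ref{lem:gamma-sfista}(e), we get $(1+\sqrt{\tau_k})^2 \le 4 + 2A_k\mu$, and hence
\[
\|v_k\|^2 + 2\eta_k \le \frac{4 + (2\mu+1)A_k}{A_k^2} \|y_k - x_0\|^2 .
\]

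It then suffices to force the bracketed coefficient to be at most $\tilde\sigma$, i.e. to guarantee $\tilde\sigma A_k^2 - (2\mu+1)A_k - 4 \ge 0$. Regarding the left-hand side as a quadratic in $A_k$, its larger root is precisely the threshold $\bar A(\mu,\tilde\sigma)$ in \eqref{ineq:aux-Ak} (the discriminant $(2\mu+1)^2 + 16\tilde\sigma$ and the denominator $2\tilde\sigma$ both fall out automatically), so $A_k \ge \bar A$ is exactly the condition making the third inequality hold. Finally, to translate $A_k \ge \bar A$ into the displayed $\lceil\cdot\rceil$ iteration count, I would invoke the last statement of Lemma~\ref{lm:Ak-est-fista} with this value of $\bar A$, which yields the claim verbatim.

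The main obstacle, such as it is, lies in choosing the bound on $(1+\sqrt{\tau_k})^2$ so that the resulting quadratic produces exactly the closed-form $\bar A$ of the statement rather than a looser constant; once the linearization $(1+\sqrt{\tau_k})^2 \le 2(1+\tau_k)$ is identified and $\tau_k = 1 + A_k\mu$ is substituted, the remainder is a direct citation of Lemma~\ref{lm:stat-sfista} and Lemma~\ref{lm:Ak-est-fista} together with routine algebra. There is no genuinely hard step; the proof is essentially an assembly of the preceding lemmas.
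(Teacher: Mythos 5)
Your proposal is correct and follows essentially the same route as the paper: the first two relations are read off from Lemma~\ref{lm:stat-sfista}(b), the bound $\|v_k\|^2+2\eta_k \le \frac{1}{A_k}\bigl(\frac{4}{A_k}+2\mu+1\bigr)\|y_k-x_0\|^2$ is obtained from Lemma~\ref{lm:stat-sfista}(c) via the same linearization $(1+\sqrt{\tau_k})^2 \le 2(1+\tau_k)$ and $\tau_k = 1+A_k\mu$, the threshold $\bar A$ is identified as the larger root of the same quadratic, and the iteration count is imported from Lemma~\ref{lm:Ak-est-fista}. The only difference is that you make explicit the step $(1+\sqrt{\tau_k})^2 \le 2(1+\tau_k)$, which the paper leaves implicit.
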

\begin{proof}
The first two relations in \eqref{eq:fista_invar} follow immediately from Lemma~\ref{lm:stat-sfista}.
Now, it follows from Lemma~\ref{lm:stat-sfista}(c) and Lemma~\ref{lem:gamma-sfista}(e) that
	\begin{equation}\label{ineq:aux100}
	\|v_k\|^2+2\eta_k \le \left[\frac{2}{A^2_k}\left(1+\tau_k\right)+ \frac{1}{A_k}\right]\|y_k-x_0\|^2= \frac{1}{A_k}\left(\frac{4}{A_k} + 2\mu +1\right)\|y_k-x_0\|^2.
	\end{equation}
Since	
	\[
	\frac{1}{A_k}\left(\frac{4}{A_k} + 2\mu +1\right) \leq \tilde \sigma\Longleftrightarrow \tilde \sigma A_k^2-(2\mu+1)A_k-4 \geq 0,
	\]
we then conclude that the last inequality in \eqref{eq:fista_invar} follows from \eqref{ineq:aux100} and the fact that  the right hand side of \eqref{ineq:aux-Ak} corresponds to the largest root of the above quadratic equation.
The last statement of the lemma follows immediately from  the last statement of Lemma~\ref{lm:Ak-est-fista}. 
\end{proof}

The next result gives the complexity bound for a (slightly) different relative error criterion.

\begin{lem}
Let $\{(y_{k},v_{k},\eta_{k})\}$ and $\bar{A}(\cdot,\cdot)$ be as
in Lemma~\ref{lem:fista_basic_compl}. Then for any $\sigma>0$ and
$k\geq1$, the triple $(y,v,\eta)=(y_{k},v_{k},\eta_{k})$ satisfies
\begin{equation}
v\in\partial_{\eta}\left(\phi-\frac{\mu}{2}\|\cdot-y\|^{2}\right)(y),\quad\eta\geq0,\quad\|v\|^{2}+2\eta\leq\sigma\|v+y-y_{0}\|^{2},\label{eq:alt_fista_invar}
\end{equation}
as long as $A_{k}\geq\bar{A}(\mu,\sigma/(1+\sqrt{\sigma})^{2})$,
which in turn is satisfied in at most
\begin{equation}
\left\lceil \min\left\{ 2\sqrt{(L_f-\mu_{f}){\cal A}_{\mu,\sigma}},\left(\frac{1}{2}+\sqrt{\frac{L_f-\mu_{f}}{\mu}}\right)\log_{1}^{+}\left([L_f-\mu_{f}]{\cal A}_{\mu,\sigma}\right)+1\right\} \right\rceil \label{eq:spec_S-FISTA_bd}
\end{equation}
iterations of S-FISTA, where ${\cal A}_{\mu,\sigma}:=(2\mu+3)(1+\sqrt{\sigma})^{2}/\sigma$.
\end{lem}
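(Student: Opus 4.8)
The plan is to reduce this result to Lemma~\ref{lem:fista_basic_compl} via a judicious choice of the parameter $\tilde\sigma$ appearing there, followed by a short triangle-inequality argument that converts the bound $\|v\|^2+2\eta\le\tilde\sigma\|y-y_0\|^2$ into the desired bound involving $\|v+y-y_0\|^2$. The only new feature of \eqref{eq:alt_fista_invar} relative to \eqref{eq:fista_invar} is that the reference norm on the right-hand side is $\|v+y-y_0\|$ rather than $\|y-y_0\|$; since these two vectors differ precisely by $v$, which the first criterion already controls, the two criteria should be interchangeable up to a rescaling of $\sigma$.

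First I would invoke Lemma~\ref{lem:fista_basic_compl} with $\tilde\sigma:=\sigma/(1+\sqrt{\sigma})^2$. Recalling that $x_0=y_0$ from step~0 of S-FISTA, this immediately delivers the inclusion $v\in\partial_\eta(\phi-\frac{\mu}{2}\|\cdot-y\|^2)(y)$ and $\eta\ge0$ in \eqref{eq:alt_fista_invar}, together with the auxiliary estimate $\|v\|^2+2\eta\le\tilde\sigma\|y-y_0\|^2$, valid whenever $A_k\ge\bar A(\mu,\tilde\sigma)$. In particular $\|v\|\le\sqrt{\tilde\sigma}\,\|y-y_0\|$, so the triangle inequality $\|y-y_0\|\le\|v+y-y_0\|+\|v\|$ gives $(1-\sqrt{\tilde\sigma})\|y-y_0\|\le\|v+y-y_0\|$; this is where $\tilde\sigma<1$ is needed, and indeed $\tilde\sigma=\sigma/(1+\sqrt\sigma)^2<1$ since $(1+\sqrt\sigma)^2>\sigma$. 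Substituting back yields $\|v\|^2+2\eta\le[\tilde\sigma/(1-\sqrt{\tilde\sigma})^2]\,\|v+y-y_0\|^2$, and a direct computation shows that the choice of $\tilde\sigma$ was rigged precisely so that $\tilde\sigma/(1-\sqrt{\tilde\sigma})^2=\sigma$ (here $1-\sqrt{\tilde\sigma}=1/(1+\sqrt\sigma)$), which establishes the third relation in \eqref{eq:alt_fista_invar}.

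It remains to turn the iteration count of Lemma~\ref{lem:fista_basic_compl}, which is stated in terms of $\bar A(\mu,\tilde\sigma)$, into the bound \eqref{eq:spec_S-FISTA_bd} stated in terms of ${\cal A}_{\mu,\sigma}$. Since that iteration count is a nondecreasing function of its $\bar A$-argument (it is the ceiling of a minimum of two nondecreasing functions of $\bar A$), it suffices to verify $\bar A(\mu,\tilde\sigma)\le{\cal A}_{\mu,\sigma}$. Plugging $\tilde\sigma=\sigma/(1+\sqrt\sigma)^2$ into \eqref{ineq:aux-Ak} and noting that $1/\tilde\sigma=(1+\sqrt\sigma)^2/\sigma$, this inequality reduces, after clearing the common positive factor $(1+\sqrt\sigma)^2/\sigma$ and multiplying through by $2$, to $2\mu+1+\sqrt{(2\mu+1)^2+16\tilde\sigma}\le 2(2\mu+3)$; isolating the radical and squaring (using $(2\mu+5)^2-(2\mu+1)^2=16\mu+24$) turns this into $\tilde\sigma\le\mu+3/2$, which holds trivially since $\tilde\sigma<1\le\mu+3/2$. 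Substituting ${\cal A}_{\mu,\sigma}$ for $\bar A(\mu,\tilde\sigma)$ in the bound of Lemma~\ref{lem:fista_basic_compl} then gives \eqref{eq:spec_S-FISTA_bd}.

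I expect no serious obstacle here, as the entire argument is a reduction to the previous lemma. The one step requiring genuine thought rather than bookkeeping is identifying the correct reparametrization $\tilde\sigma=\sigma/(1+\sqrt\sigma)^2$, which is obtained by solving $\tilde\sigma/(1-\sqrt{\tilde\sigma})^2=\sigma$ for $\tilde\sigma$; once this is in hand, both the norm conversion and the comparison $\bar A(\mu,\tilde\sigma)\le{\cal A}_{\mu,\sigma}$ are routine algebraic verifications.
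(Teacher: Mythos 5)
Your proposal is correct and follows essentially the same route as the paper: both reduce to Lemma~\ref{lem:fista_basic_compl} with the reparametrization $\tilde\sigma=\sigma/(1+\sqrt\sigma)^2$, convert $\|y-y_0\|$ to $\|v+y-y_0\|$ by an elementary inequality (you use the triangle inequality and the identity $1-\sqrt{\tilde\sigma}=1/(1+\sqrt\sigma)$, the paper uses the weighted bound $(a+b)^2\le(1+\sqrt\sigma)a^2+(1+1/\sqrt\sigma)b^2$, to the same effect), and then check $\bar A(\mu,\tilde\sigma)\le{\cal A}_{\mu,\sigma}$ using $\tilde\sigma<1$. The minor differences are cosmetic and your algebra checks out.
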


\begin{proof}
Let $\tilde{\sigma}=\sigma/(1+\sqrt{\sigma})^{2}\in(0,1)$. Using
Lemma~\ref{lem:fista_basic_compl} and the fact that $y_{0}=x_{0}$,
it follows that first two relations in \eqref{eq:alt_fista_invar}
hold and $\|v\|^{2}+2\eta\leq\tilde{\sigma}\|y-y_{0}\|^{2}.$ Using
the previous inequality, the definition of $\tilde{\sigma}$, and
the relation $(a+b)^{2}\leq(1+\sqrt{\sigma})a^{2}+(1+1/\sqrt{\sigma})b^{2}$ for any $a,b\in{\mathbb R}$,
we have
\[
\|v\|^{2}+2\eta\leq\frac{\sigma}{1+\sqrt{\sigma}}\|v+y-y_{0}\|^{2}+\frac{\sqrt{\sigma}}{1+\sqrt{\sigma}}\|v\|^{2}
\]
which easily implies the last relation in \eqref{eq:alt_fista_invar}.
Finally, to obtain the bound in \eqref{eq:spec_S-FISTA_bd}, we first
use the definitions of $\tilde{\sigma}$ and $\bar{A}_{\mu,\sigma}$
with the fact $\tilde{\sigma}\in(0,1)$ to bound
\[
\bar{A}(\mu,\tilde{\sigma})\leq\frac{2\mu+1}{\tilde{\sigma}}+\frac{2}{\sqrt{\tilde{\sigma}}}\leq\frac{2\mu+3}{\tilde{\sigma}}=\frac{(2\mu+3)(1+\sqrt{\sigma})^{2}}{\sigma}={\cal A}_{\mu,\sigma}.
\]
The iteration complexity now follow from the last statement of Lemma~\ref{lm:Ak-est-fista}
and the above bound.
\end{proof}



The next result shows some bounds on the sequences $\{x_k\}$ and $\{y_k\}$ generated by S-FISTA.

\begin{lemma}\label{lem:aux000}
For every $k \ge 1$, the following estimates hold:
\[
\|x_k-x_0 \| \le \left( \frac{1}{\sqrt{\tau_k}} + 1 \right) d_0, \quad \|y_k-x_0\| \le 2 \left( 1 + \frac{2}{A_k \mu} \right) d_0,\\
\]
\end{lemma}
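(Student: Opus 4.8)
The plan is to prove both estimates by comparing $x_k$ and $y_k$ to a closest optimal solution $x^*$ (chosen so that $\|x_0-x^*\|=d_0$) and invoking the triangle inequality. The distance $\|x_k-x^*\|$ will be controlled through the fundamental recursion of Lemma~\ref{lm:hysub-5-ac}, while the gap $\|y_k-x_k\|$ will be controlled through the nonnegativity of $\eta_k$ established in Lemma~\ref{lm:stat-sfista}.

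First I would handle $\|x_k-x_0\|$. Evaluating Lemma~\ref{lm:hysub-5-ac} at $x=x^*$ and discarding the two manifestly nonnegative quantities $A_k[\phi(y_k)-\phi^*]\ge 0$ (since $x^*$ minimizes $\phi$) and $\tfrac12(L_f-\bar{L}_f)\sum_{i}A_{i+1}\|\tilde y_{i+1}-\tx_i\|^2\ge 0$ (since $L_f>\bar{L}_f$) yields $\tfrac{\tau_k}2\|x_k-x^*\|^2\le\tfrac12 d_0^2$, that is, $\|x_k-x^*\|\le d_0/\sqrt{\tau_k}$. The triangle inequality $\|x_k-x_0\|\le\|x_k-x^*\|+\|x^*-x_0\|$ then produces the first estimate.

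For $\|y_k-x_0\|$, the key observation is that the nonnegativity of $\eta_k$ from Lemma~\ref{lm:stat-sfista}(b), combined with the definition of $\eta_k$ in \eqref{def:vk-etak}, gives at once $\tau_k\|x_k-y_k\|^2\le\|x_0-y_k\|^2$, i.e.\ $\sqrt{\tau_k}\,\|y_k-x_k\|\le\|y_k-x_0\|$. Inserting this into the triangle inequality $\|y_k-x_0\|\le\|y_k-x_k\|+\|x_k-x_0\|$ produces a self-referential bound which, since $\tau_k=1+A_k\mu>1$ whenever $\mu>0$ by Lemma~\ref{lem:gamma-sfista}(e) (the case $\mu=0$ being vacuous, as the right-hand side is then $+\infty$), rearranges to $\|y_k-x_0\|\le\frac{\sqrt{\tau_k}}{\sqrt{\tau_k}-1}\|x_k-x_0\|$. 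Combining with the first estimate $\|x_k-x_0\|\le(1/\sqrt{\tau_k}+1)d_0$ yields $\|y_k-x_0\|\le\frac{\sqrt{\tau_k}+1}{\sqrt{\tau_k}-1}\,d_0$.

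The final step is purely algebraic: writing $s=\sqrt{\tau_k}$ and using $A_k\mu=\tau_k-1=(s-1)(s+1)$, the target inequality $\frac{s+1}{s-1}\le 2\bigl(1+\tfrac{2}{A_k\mu}\bigr)$ reduces, after clearing the positive denominators $s-1$ and $s+1$, to $(s-1)^2\ge 0$, which closes the argument in the stated form. I expect the only real (though mild) obstacle to be the self-referential triangle inequality for $\|y_k-x_0\|$: it closes precisely because $\eta_k\ge0$ supplies the contraction factor $1/\sqrt{\tau_k}<1$, and one must treat the degenerate case $\mu=0$ separately; the concluding step is then a one-line consequence of $(s-1)^2\ge0$, which also accounts for the otherwise opaque constants $2$ and $2/(A_k\mu)$. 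As an aside, a slightly tighter bound $\|y_k-x_0\|\le(1+1/\sqrt{A_k\mu})d_0$ is available by instead combining $\phi(y_k)-\phi^*\le d_0^2/(2A_k)$ with the $\mu$-strong convexity of $\phi$, but the route above has the advantage of using only the algorithm's own recursions.
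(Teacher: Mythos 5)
Your proof is correct and takes essentially the same route as the paper's: bound $\|x_k-x^*\|\le d_0/\sqrt{\tau_k}$ via Lemma~\ref{lm:hysub-5-ac}, apply the triangle inequality, close the self-referential bound on $\|y_k-x_0\|$ using $\sqrt{\tau_k}\|x_k-y_k\|\le\|x_0-y_k\|$, and finish with the $(\sqrt{\tau_k}-1)^2\ge 0$ algebra. The only cosmetic difference is that you obtain the inequality $\tau_k\|x_k-y_k\|^2\le\|x_0-y_k\|^2$ from the nonnegativity of $\eta_k$ in Lemma~\ref{lm:stat-sfista}(b), whereas the paper reads it off directly from Lemma~\ref{lm:hysub-5-ac} with $x=y_k$ — these are the same fact.
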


\begin{proof}
Let $x^*$ be a solution of \eqref{OptProb} such that $\|x_0-x^*\|=d_0$. It follows from Lemma~\ref{lm:hysub-5-ac} with $x=y_k$ and $x=x^*$ that
\begin{equation}\label{aux90}
\tau_k \|x_k-y_k\|^2\leq \|x_0-y_k\|^2, \quad \tau_k \|x_k-x^*\|^2\leq \|x_0-x^*\|^2.
\end{equation}
Hence,  using the triangle inequality for norms, we have
\[
\|x_0-x_k\| \le \|x_0-x^*\| + \|x_k-x^*\| \le \left(1 + \frac1{\sqrt{\tau_k}} \right) \|x_0-x^*\| = \left(1 + \frac1{\sqrt{\tau_k}} \right) d_0,
\]
which proves the first inequality of the lemma.
Moreover, using the triangle inequality for norms  and the first inequality in \eqref{aux90}, we have
\[
\|y_k-x_0\| \le \|x_0-x_k\| + \|x_k-y_k\| \le \|x_0-x_k\| + \frac1{\sqrt{\tau_k}}\|x_0-y_k\|.
\]
Rewriting the above inequality and using the first inequality of the lemma, we have
\[
\left(1 - \frac1{\sqrt{\tau_k}} \right) \|x_0-y_k\| \le \|x_0-x_k\| \le \left(1 + \frac1{\sqrt{\tau_k}} \right) d_0.
\]
Thus,
\[
\|x_0-y_k\| \le \frac{\sqrt{\tau_k} + 1}{\sqrt{\tau_k} - 1} d_0 = \frac{(\sqrt{\tau_k} + 1)^2}{\tau_k - 1} d_0
\le  \frac{2(\tau_k + 1)}{\tau_k-1} d_0 = 2 \left( 1 + \frac2 {\tau_k -1} \right) d_0.
\]
The second inequality of the lemma now follows from the fact that $\tau_k=1+A_k\mu$ in view of Lemma~\ref{lem:gamma-sfista}(e).
\end{proof}
The below result  establishes some alternative iteration complexity bounds for the residuals pair $(v_k,\eta_k)$ defined in \eqref{def:vk-etak}.
\begin{lemma} The following inequalities hold
\begin{equation}\label{ineq:estvk-etak}
\|v_k\| \le \frac{2}{A_k}\left(2+\sqrt{\mu A_k}\right)\left( 1 + \frac{2}{A_k \mu} \right) d_0, \quad \eta_k \le\frac{2}{A_k}\left( 1 + \frac{2}{A_k \mu} \right)^2 d_0^2.
\end{equation}
As a consequence, for given a given tolerance pair $(\varepsilon,\eta)\in \R^2_{++}$,  we have 
\begin{equation}\label{ineq:vk-etak-erroabs}
\|v_k\|\leq \varepsilon, \qquad \eta_k\leq \eta    
\end{equation}
in at most 
\begin{align*}
k & := \left \lceil\min \left\{ 8\left(\frac{1}{\sqrt{\varepsilon}}+\frac{\sqrt{\mu d_0}}{\varepsilon}+\frac{\sqrt{d_0}}{\sqrt{\eta}}\right)\sqrt{\mathcal{M} d_0},\right. \right. \\ 
& \qquad \qquad \enskip \left. \left. \left[\frac{1}{2}+\sqrt{\frac{L_f - \mu_f}{\mu}}\right]\log^+_1\left( 16
\left[\frac{1}{\varepsilon}+\frac{\mu d_0}{\varepsilon^2} +\frac{d_0}{\eta}\right]
\mathcal{M} d_0\right)+1\right\}\right\rceil
\end{align*}
iterations, where
$$
\mathcal{M}=\mathcal{M}(\mu_f,\mu,L_f):=\left(1+\frac{8(L_f - \mu_f)}{\mu}\right)^2(L_f - \mu_f)
$$
\end{lemma}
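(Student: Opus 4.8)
The plan is to prove the two explicit estimates in \eqref{ineq:estvk-etak} directly by combining three earlier results, and then to convert them into an iteration count by exhibiting a single threshold $\bar{A}$ such that $A_k \ge \bar{A}$ forces both $\|v_k\| \le \varepsilon$ and $\eta_k \le \eta$, after which the last statement of Lemma~\ref{lm:Ak-est-fista} finishes the job.

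For the bounds \eqref{ineq:estvk-etak}, I would start from Lemma~\ref{lm:stat-sfista}(c), that is, $\|v_k\| \le (1+\sqrt{\tau_k})\|y_k-x_0\|/A_k$ and $\eta_k \le \|y_k-x_0\|^2/(2A_k)$, and substitute the estimate $\|y_k-x_0\| \le 2(1 + 2/(A_k\mu))d_0$ from Lemma~\ref{lem:aux000}. Using $\tau_k = 1+A_k\mu$ from Lemma~\ref{lem:gamma-sfista}(e) together with the subadditivity $\sqrt{1+A_k\mu}\le 1+\sqrt{A_k\mu}$, one gets $1+\sqrt{\tau_k}\le 2+\sqrt{\mu A_k}$, and the two inequalities in \eqref{ineq:estvk-etak} follow immediately.

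For the complexity bound, abbreviate $C:=1+2/(A_k\mu)$. Since $\{A_k\}$ is nondecreasing with $A_1=\lam=1/(L_f-\mu_f)$, we have $C \le 1 + 2(L_f-\mu_f)/\mu$ for every $k\ge1$, hence $C$ is dominated by $C_{\max}:=1+8(L_f-\mu_f)/\mu$, for which $\mathcal{M}=C_{\max}^2(L_f-\mu_f)$. Writing the first bound in \eqref{ineq:estvk-etak} as $\|v_k\|\le 4Cd_0/A_k + 2Cd_0\sqrt{\mu/A_k}$, I would split the target accuracy and require each additive piece to be at most $\varepsilon/2$ and the $\eta_k$-bound $2C^2d_0^2/A_k$ to be at most $\eta$; these three requirements read $A_k\ge 8Cd_0/\varepsilon$, $A_k\ge 16C^2\mu d_0^2/\varepsilon^2$, and $A_k\ge 2C^2d_0^2/\eta$, respectively. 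Bounding the maximum of the three thresholds by their sum and replacing $C$ by $C_{\max}$ (using $1\le C_{\max}\le C_{\max}^2$) collapses them into the single sufficient condition $A_k\ge\bar{A}$ with $(L_f-\mu_f)\bar{A}=16[1/\varepsilon+\mu d_0/\varepsilon^2+d_0/\eta]\mathcal{M}d_0$.

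Finally, I would feed this $\bar{A}$ into the last statement of Lemma~\ref{lm:Ak-est-fista}. The logarithmic branch $[\tfrac12+\sqrt{(L_f-\mu_f)/\mu}]\log^+_1((L_f-\mu_f)\bar{A})+1$ reproduces the second entry of the claimed minimum verbatim, while for the first branch one computes $2\sqrt{(L_f-\mu_f)\bar{A}}=8\sqrt{\mathcal{M}d_0}\,\sqrt{1/\varepsilon+\mu d_0/\varepsilon^2+d_0/\eta}$ and applies $\sqrt{a+b+c}\le\sqrt{a}+\sqrt{b}+\sqrt{c}$ to bound it by $8\sqrt{\mathcal{M}d_0}\,(1/\sqrt{\varepsilon}+\sqrt{\mu d_0}/\varepsilon+\sqrt{d_0}/\sqrt{\eta})$, matching the first entry. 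The hard part will be purely the bookkeeping: arranging the numerical constants so that the deliberately generous uniform bound $C\le C_{\max}$ (the factor $8$ giving the needed slack) simultaneously dominates the three differently-weighted thresholds and folds them cleanly into the single constant $\mathcal{M}$.
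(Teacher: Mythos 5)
Your proposal is correct and follows essentially the same route as the paper: both obtain \eqref{ineq:estvk-etak} by combining Lemma~\ref{lm:stat-sfista}(c), Lemma~\ref{lem:aux000}, and $\tau_k=1+A_k\mu$ with $\sqrt{1+A_k\mu}\le 1+\sqrt{A_k\mu}$, then bound $1+2/(A_k\mu)$ uniformly by $1+8(L_f-\mu_f)/\mu$ via a lower bound on $A_1$, split the requirement into the same three thresholds, sum them into a single $\bar A$, and invoke the last statement of Lemma~\ref{lm:Ak-est-fista} together with $\sqrt{a+b+c}\le\sqrt a+\sqrt b+\sqrt c$. The only cosmetic difference is that you use $A_1=1/(L_f-\mu_f)$ where the paper uses the weaker $A_k\ge 1/[4(L_f-\mu_f)]$, both landing on the same constant $\mathcal{M}$.
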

\begin{proof}
The inequalities in \eqref{ineq:estvk-etak} follows by combining Lemma~\ref{lem:gamma-sfista}(e), Lemma~\ref{lm:stat-sfista}(c), and  Lemma~\ref{lem:aux000}.

Now, in view of \eqref{eq:Akest-sfista}, we have $A_k\geq A_1 \geq 1/[4(L_f - \mu_f)]$ for every $k\geq 1$. Hence,   it follows from \eqref{ineq:estvk-etak} that $$
\|v_k\| \le \frac{2}{A_k}\left(2+\sqrt{\mu A_k}\right)\left( 1 + \frac{8(L_f - \mu_f)}{\mu} \right) d_0, \quad \eta_k \le\frac{2}{A_k}\left( 1 + \frac{8(L_f - \mu_f)}{\mu} \right)^2 d_0^2
$$
which implies that in order to $(v_k,\varepsilon_k)$ to satisfy  \eqref{ineq:vk-etak-erroabs}, it is sufficient to have
\begin{align*}
\begin{gathered}
\frac{4}{A_k}\left( 1 + \frac{8(L_f - \mu_f)}{\mu} \right) d_0 \leq \frac{\varepsilon}{2}, \qquad
\frac{2\sqrt{\mu}}{\sqrt{A_k}}\left( 1 + \frac{8(L_f - \mu_f)}{\mu} \right) d_0 \leq \frac{\varepsilon}{2}, \\
\frac{2}{A_k}\left( 1 + \frac{8(L_f - \mu_f)}{\mu} \right)^2 d_0^2 \leq \eta.
\end{gathered}
\end{align*}
Note that the above inequalities are satisfied if
$$
A_k\geq \frac{8}{\varepsilon}\left( 1 + \frac{8(L_f - \mu_f)}{\mu} \right) d_0+\left(\frac{16\mu}{\varepsilon^2}+\frac{2}{\eta}\right)\left( 1 + \frac{8(L_f - \mu_f)}{\mu} \right)^2 d_0^2.
$$
Hence, the last statement of the lemma follows from the above inequalities,  the last 
statement of Lemma~\ref{lm:Ak-est-fista}, and the definition of $\mathcal{M}$.
\end{proof}

\section{Alternate Formulations of S-FISTA}\label{Sec:Fista-mu=0}
This section presents alternate formulations of S-FISTA for the case of $\mu=0$. Although, we assume that $\mu=0$, it is worth mentioning that similar results as the ones obtained in this section can be extended for the general case where $\mu\geq 0$.


We begin by deriving an alternate expression for $y_{k+1}$.

\begin{lemma} \label{lm:ty=y-fista} Assume that $\mu=0$. Then,
for every $k \ge 0$, we have
\[
y_{k+1}= \frac{A_k y_k+a_k x_{k+1}}{A_{k+1}}.
\]
\end{lemma}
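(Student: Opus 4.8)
The plan is to reduce the claimed identity to the explicit recursion for $x_{k+1}$ furnished by Lemma~\ref{lem:gamma-sfista}(d), and then exploit the simplifications that arise when $\mu=0$. First I would record what $\mu=0$ buys us: since $\tau_k=1$ for all $k$ (as noted right after the statement of S-FISTA), the update $\tau_{k+1}=\tau_k+\mu a_k$ gives $\tau_{k+1}=1$ as well, and the second identity in Lemma~\ref{lem:gamma-sfista}(d) collapses to
\[
x_{k+1} = x_k + \frac{a_k}{\lam}\,(y_{k+1}-\tx_k),
\]
because the $\mu(y_{k+1}-x_k)$ term vanishes and the denominator $\tau_{k+1}$ equals $1$.

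Next I would substitute this expression into the right-hand side $(A_k y_k + a_k x_{k+1})/A_{k+1}$ and invoke the definition of $\tx_k$ in \eqref{def:ak-sfista}, which yields $A_k y_k + a_k x_k = A_{k+1}\tx_k$. This rewrites the right-hand side as
\[
\frac{A_{k+1}\tx_k + (a_k^2/\lam)(y_{k+1}-\tx_k)}{A_{k+1}} = \tx_k + \frac{a_k^2}{\lam A_{k+1}}\,(y_{k+1}-\tx_k).
\]

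The final — and really the only substantive — step is to check that the scalar coefficient $a_k^2/(\lam A_{k+1})$ equals $1$. This is exactly where \eqref{eq:Aalam-sfista} enters: with $\tau_k=1$ it reads $A_{k+1}/a_k^2 = 1/\lam$, i.e.\ $a_k^2 = \lam A_{k+1}$, so the coefficient collapses to $1$ and the right-hand side becomes $\tx_k + (y_{k+1}-\tx_k) = y_{k+1}$, as claimed. I do not anticipate any genuine obstacle here: the argument is a short chain of substitutions, and the only point demanding care is to apply \eqref{eq:Aalam-sfista} in the specialized form valid for $\mu=0$ (equivalently $\tau_k=1$) rather than its general version, and to remember that $\mu=0$ forces both $\tau_k$ and $\tau_{k+1}$ to equal $1$ so that the $x_{k+1}$ recursion simplifies cleanly.
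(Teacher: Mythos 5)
Your proof is correct and is essentially the paper's own argument: both rest on the $\mu=0$ simplification $x_{k+1}-x_k=\frac{a_k}{\lam}(y_{k+1}-\tx_k)$, the identity $A_ky_k+a_kx_k=A_{k+1}\tx_k$ from \eqref{def:ak-sfista}, and the relation $a_k^2=\lam A_{k+1}$ from \eqref{eq:Aalam-sfista} with $\tau_k=1$. The paper merely packages the same substitutions by computing $\frac{a_k}{A_{k+1}}(x_{k+1}-x_k)$ in two ways and equating, which is a cosmetic difference only.
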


\begin{proof}
It follows from \eqref{eq:xnext-sfista} and \eqref{eq:Aalam-sfista} that
\[
\frac{a_k}{A_{k+1}} (x_{k+1}-x_k)  =  \frac{\lam}{a_k} (x_{k+1}-x_k) = y_{k+1}-\tx_k.
\]
On the other hand, it follows from the last identity in \eqref{def:ak-sfista} that
\[
\frac{a_k}{A_{k+1}} (x_{k+1}-x_k)  =  \frac{A_k y_k+a_k x_{k+1}}{A_{k+1}} - \tx_k.
\]
The result now follows by combining the above two identities.
\end{proof}

The next result shows that  the auxiliary sequence  $\{\tilde x_k\}$ generated by S-FISTA can be expressed in terms of the sequence $\{y_k\}$ and a scalar sequence that can be easily generated by solving  a quadratic equation. 
\begin{lemma} \label{lem:sfista_fista} Assume that $\mu=0$ and, for every $k \ge 0$, define
\beq \label{def:tk-fista}
t_k := \frac{A_{k+1}}{a_k} = \frac{a_k}\lam.
\eeq 
Then, for every $k \ge 0$, we have
\begin{equation}\label{rel:xtildek-tk}
\tx_{k+1}= y_{k+1} + \frac{t_k-1}{t_{k+1}} (y_{k+1}-y_k)    
\end{equation}
and
\begin{equation}\label{relation:tk}
t_{k+1}^2 - t_{k+1} - t_k^2 =0.
\end{equation}

\end{lemma}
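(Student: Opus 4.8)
The plan is to reduce everything to the single scalar sequence $\{t_k\}$ and to derive both identities from the defining recursion $A_{k+2}=A_{k+1}+a_{k+1}$ together with Lemma~\ref{lm:ty=y-fista}. \textbf{First}, I would record the basic consequences of $\mu=0$. Since $\mu=0$ forces $\tau_k=1$ for all $k$, relation~\eqref{eq:Aalam-sfista} reads $A_{k+1}/a_k^2=1/\lam$, so the two expressions for $t_k$ in \eqref{def:tk-fista} indeed coincide and, moreover, $a_k=\lam t_k$, $A_{k+1}=\lam t_k^2$, and hence $A_k=A_{k+1}-a_k=\lam t_k(t_k-1)$. These three substitutions are the only facts I will use about the $A$- and $a$-sequences.

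\textbf{Second}, I would establish \eqref{relation:tk} before \eqref{rel:xtildek-tk}, since the former is needed to simplify the latter. Writing the recursion $A_{k+2}=A_{k+1}+a_{k+1}$ in these coordinates gives $\lam t_{k+1}^2=\lam t_k^2+\lam t_{k+1}$; dividing by $\lam>0$ yields $t_{k+1}^2-t_{k+1}-t_k^2=0$, which is exactly \eqref{relation:tk}. Equivalently, \eqref{relation:tk} is nothing but the $A$-recursion expressed through $t$.

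\textbf{Third}, for \eqref{rel:xtildek-tk} I would use Lemma~\ref{lm:ty=y-fista} at index $k$ to eliminate $x_{k+1}$. Solving $y_{k+1}=(A_k y_k+a_k x_{k+1})/A_{k+1}$ for $x_{k+1}$ and inserting the $t$-expressions from the first step gives the clean formula $x_{k+1}=t_k y_{k+1}-(t_k-1)y_k$. Substituting this into the definition $\tx_{k+1}=(A_{k+1}y_{k+1}+a_{k+1}x_{k+1})/A_{k+2}$ (the last relation in \eqref{def:ak-sfista} with $k$ replaced by $k+1$) and again replacing $A_{k+1},a_{k+1},A_{k+2}$ by $\lam t_k^2,\lam t_{k+1},\lam t_{k+1}^2$, the common factor $\lam$ cancels and one is left with $\tx_{k+1}=[(t_k^2+t_{k+1}t_k)y_{k+1}-t_{k+1}(t_k-1)y_k]/t_{k+1}^2$. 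Finally, applying \eqref{relation:tk} in the form $t_k^2=t_{k+1}^2-t_{k+1}$ to the coefficient of $y_{k+1}$ turns that coefficient into $1+(t_k-1)/t_{k+1}$, while the coefficient of $y_k$ is already $-(t_k-1)/t_{k+1}$; collapsing these gives $\tx_{k+1}=y_{k+1}+\frac{t_k-1}{t_{k+1}}(y_{k+1}-y_k)$, as claimed.

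The computation is entirely algebraic, so there is no serious obstacle; the only point requiring care is the bookkeeping in the third step. In particular, the coefficient of $y_{k+1}$ in $\tx_{k+1}$ does not reduce to the target value until \eqref{relation:tk} is invoked, so the order of the argument—proving the quadratic recursion first—genuinely matters. I would also note in passing that $\lam>0$ and $t_{k+1}>0$ (the latter from $a_k>0$) make all the divisions above legitimate.
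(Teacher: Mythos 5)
Your proof is correct, and every algebraic step checks out: with $\mu=0$ one has $\tau_k\equiv 1$, so \eqref{eq:Aalam-sfista} gives $a_k=\lam t_k$, $A_{k+1}=\lam t_k^2$, $A_k=\lam t_k(t_k-1)$, and from there your derivations of \eqref{relation:tk} and \eqref{rel:xtildek-tk} are sound. The route differs from the paper's in its logical organization. The paper proves the two identities independently: for \eqref{rel:xtildek-tk} it computes $\tx_{k+1}-y_{k+1}=\frac{a_{k+1}}{A_{k+2}}(x_{k+1}-y_{k+1})=\frac{1}{t_{k+1}}(x_{k+1}-y_{k+1})$ directly from the last identity in \eqref{def:ak-sfista}, and separately shows $(t_k-1)(y_{k+1}-y_k)=x_{k+1}-y_{k+1}$ via Lemma~\ref{lm:ty=y-fista}; combining these two displays yields \eqref{rel:xtildek-tk} \emph{without} ever invoking \eqref{relation:tk}, which is then proved afterwards by the same one-line computation you give. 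Your approach instead reparametrizes all of $a_k,A_k,A_{k+1}$ through $t_k$, eliminates $x_{k+1}$ explicitly as $t_k y_{k+1}-(t_k-1)y_k$, and substitutes into the definition of $\tx_{k+1}$, which forces you to establish the quadratic recursion first in order to collapse the coefficient of $y_{k+1}$. So your remark that ``the order of the argument genuinely matters'' is true of your route but not intrinsic to the lemma; the paper's factorization into the two intermediate identities is slightly leaner, while your substitution is more mechanical and also yields the explicit formula $x_{k+1}=t_k y_{k+1}-(t_k-1)y_k$ as a useful by-product. (One trivial slip: positivity of $t_{k+1}$ comes from $a_{k+1}>0$ and $A_{k+2}>0$, not from $a_k>0$.)
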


\begin{proof}
First, note the that the second equality of \eqref{def:tk-fista} follows from \eqref{def:ak-sfista}.
It follows from \eqref{def:tk-fista} with $k=k+1$ and the two last identities in \eqref{def:ak-sfista} both with $k=k+1$ that
\[
\tx_{k+1} - y_{k+1} = \frac{A_{k+1}y_{k+1}+a_{k+1}x_{k+1}}{A_{k+2}} - y_{k+1} =
\frac{a_{k+1}}{A_{k+2}} ( x_{k+1} - y_{k+1}) = \frac{1}{t_{k+1}}  ( x_{k+1} - y_{k+1}).
\]
On the other hand, it follows from \eqref{def:tk-fista}, the second identity in \eqref{def:ak-sfista}, and Lemma \ref{lm:ty=y-fista}, that
\begin{align*}
(t_k-1) (y_{k+1}-y_k) &= \left( \frac{A_{k+1}}{a_k} -1 \right) (y_{k+1}-y_k)
= \frac{A_k}{a_k} \left( y_{k+1} - y_k \right) \\
&= \frac{1}{a_k} \left[ A_ky_{k+1} - ( A_{k+1} y_{k+1} - a_k x_{k+1}) \right] = x_{k+1} - y_{k+1}.
\end{align*}
The first identity of the lemma  now follows by combining the above two identities.
Now, it follows from  \eqref{def:tk-fista}  that
\[
t_k^2 = \frac{A_{k+1}}{\lam}
\]
for every $k \ge 0$. The last identity, together with \eqref{def:tk-fista} and  the second identity in \eqref{def:ak-sfista} with $k=k+1$, then imply that
\[
t_{k+1}^2 - t_{k+1} = \frac{A_{k+2}}{\lam} - \frac{a_{k+1}}{\lam} = \frac{A_{k+1}}{\lam} = t_k^2,
\]
and hence that the second identity of the lemma also holds.
\end{proof}

We now make a few remarks about the relations above and how they relate to the ones given in FISTA. First, \eqref{relation:tk} implies that the iterates $\{t_k\}$ have the recursive form
\[
t_{k+1} = \frac{1+\sqrt{1+4t_k^2}}{2}.
\]
Second, in view of the first remark, \eqref{rel:xtildek-tk}, and the fact that $t_0=1$, we conclude that the iterates $\{(y_k,\tilde{x}_k,t_k)\}$ generated by S-FISTA are the same as the ones generated by FISTA (see, for example, the definitions in \cite{beck2017first,beck2009fast}).









The next result presents  an alternative way of expressing the relations in \eqref{rel:xtildek-tk} and \eqref{relation:tk}.

\begin{lemma}
Assume $\mu=0$, let $\{t_k\}$ be as in \eqref{def:tk-fista}, and define $\alpha_k= 1/t_k$ for every $k\geq 0$. Then, the following relation holds
\begin{align}
\alpha_{k+1}^2&=(1-\alpha_{k+1})\alpha_k^2,\label{aux:rel-alphak}\\
\tilde x_{k+1}&=y_{k+1} +\frac{\alpha_k\left(1-\alpha_k\right)}{\alpha_k^2+\alpha_{k+1}}(y_{k+1}-y_k).\label{aux:rel-xtildek-alphak}
\end{align}
\end{lemma}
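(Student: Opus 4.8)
The plan is to derive both identities purely algebraically from Lemma~\ref{lem:sfista_fista}, substituting the definition $\alpha_k = 1/t_k$ and invoking the recurrence \eqref{relation:tk}, namely $t_{k+1}^2 - t_{k+1} - t_k^2 = 0$. No new structural facts are needed; everything reduces to manipulating the two conclusions of the previous lemma.

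For \eqref{aux:rel-alphak}, I would start from its right-hand side. Writing $1-\alpha_{k+1} = (t_{k+1}-1)/t_{k+1}$ and $\alpha_k^2 = 1/t_k^2$, the product becomes $(t_{k+1}-1)/(t_{k+1} t_k^2)$. Since the left-hand side is $\alpha_{k+1}^2 = 1/t_{k+1}^2$, the two sides agree precisely when $t_k^2 = t_{k+1}(t_{k+1}-1) = t_{k+1}^2 - t_{k+1}$, which is exactly \eqref{relation:tk}. Thus this identity is a one-line rearrangement of the quadratic recurrence.

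For \eqref{aux:rel-xtildek-alphak}, I would compare it against \eqref{rel:xtildek-tk}, which already gives $\tilde x_{k+1} = y_{k+1} + \frac{t_k-1}{t_{k+1}}(y_{k+1}-y_k)$; hence it suffices to verify the scalar identity $\frac{\alpha_k(1-\alpha_k)}{\alpha_k^2 + \alpha_{k+1}} = \frac{t_k-1}{t_{k+1}}$. Substituting $\alpha_k = 1/t_k$ and $\alpha_{k+1}=1/t_{k+1}$, the numerator becomes $\alpha_k(1-\alpha_k) = (t_k-1)/t_k^2$ and the denominator becomes $\alpha_k^2 + \alpha_{k+1} = (t_{k+1} + t_k^2)/(t_k^2 t_{k+1})$. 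Clearing the common factor $t_k^2$ leaves the ratio $(t_k-1)t_{k+1}/(t_{k+1}+t_k^2)$, and applying \eqref{relation:tk} in the form $t_{k+1}+t_k^2 = t_{k+1}^2$ collapses this to $(t_k-1)/t_{k+1}$, matching \eqref{rel:xtildek-tk}.

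There is no genuine obstacle here, since both claims are algebraic consequences of $\alpha_k=1/t_k$ together with the quadratic recurrence \eqref{relation:tk}. The only point requiring mild care is the bookkeeping of the $t_k^2$ factors when simplifying the denominator $\alpha_k^2 + \alpha_{k+1}$ in the second identity, and the recognition that the simplification hinges on rewriting $t_{k+1}+t_k^2$ as $t_{k+1}^2$ via \eqref{relation:tk}.
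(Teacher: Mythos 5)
Your proposal is correct and follows essentially the same route as the paper: both identities are obtained by substituting $\alpha_k=1/t_k$ into the two conclusions of Lemma~\ref{lem:sfista_fista} and clearing denominators, with the quadratic recurrence \eqref{relation:tk} (equivalently, its $\alpha$-form \eqref{aux:rel-alphak}) supplying the key simplification $t_{k+1}+t_k^2=t_{k+1}^2$. The only cosmetic difference is that you carry out the algebra in the $t$-variables while the paper converts everything to $\alpha$-variables first; the computations are identical.
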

\begin{proof}
It follows from \eqref{relation:tk} and the definition of $\alpha_k$ that 
$$
\frac{1}{\alpha_{k+1}^2}-\frac{1}{\alpha_{k+1}}-\frac{1}{\alpha_k^2}=0.
$$
Multiplying both sides by $\alpha_k^2\alpha_{k+1}^2$, we arrive at
\begin{equation*}
\alpha_k^2-\alpha_k^2\alpha_{k+1}- \alpha_{k+1}^2=0
\end{equation*}
which immediately implies \eqref{aux:rel-alphak}.
Now, note that \eqref{rel:xtildek-tk} together with the definition of $\alpha_k$ imply that
\begin{align*}
\tilde x_{k+1}  &= y_{k+1} +\frac{t_k-1}{t_{k+1}}(y_{k+1}-y_k)=y_{k+1} +\alpha_{k+1}\left(\frac{1}{\alpha_k}-1\right)(y_{k+1}-y_k)\\
&=y_{k+1} +\frac{\alpha_{k+1}}{\alpha_k}\left(1-\alpha_k\right)(y_{k+1}-y_k),   
\end{align*}
which in view of  \eqref{aux:rel-alphak} proves \eqref{aux:rel-xtildek-alphak}.
\end{proof}

Similar to the remarks after Lemma~\ref{lem:sfista_fista}, the above result  shows that when $\mu=0$ and $h$ is the characteristic function of a simple set, the iterates $\{(y_k,\tilde{x}_k,t_k)\}$ generated by S-FISTA are the same as the ones generated by Nesterov's FGM in \cite[Eq (2.2.63)]{nesterov2018lectures} with $\alpha_0 = 1$ (see also \cite[Eq (2.2.17)]{nesterov2003introductory}).

\bibliographystyle{abbrvnat}
\bibliography{fista_refs}

\begin{thebibliography}{6}
\providecommand{\natexlab}[1]{#1}
\providecommand{\url}[1]{\texttt{#1}}
\expandafter\ifx\csname urlstyle\endcsname\relax
  \providecommand{\doi}[1]{doi: #1}\else
  \providecommand{\doi}{doi: \begingroup \urlstyle{rm}\Url}\fi

\bibitem[Beck(2017)]{beck2017first}
A.~Beck.
\newblock \emph{First-order methods in optimization}.
\newblock SIAM, 2017.

\bibitem[Beck and Teboulle(2009{\natexlab{a}})]{beck2009fast}
A.~Beck and M.~Teboulle.
\newblock A fast iterative shrinkage-thresholding algorithm for linear inverse
  problems.
\newblock \emph{{SIAM Journal on Imaging Sciences}}, 2\penalty0 (1):\penalty0
  183--202, 2009{\natexlab{a}}.

\bibitem[Beck and Teboulle(2009{\natexlab{b}})]{beck2009fast_v2}
A.~Beck and M.~Teboulle.
\newblock Fast gradient-based algorithms for constrained total variation image
  denoising and deblurring problems.
\newblock \emph{{IEEE Transactions on Image Processing}}, 18\penalty0
  (11):\penalty0 2419--2434, 2009{\natexlab{b}}.

\bibitem[Nesterov(1983)]{nesterov1983method}
Y.~Nesterov.
\newblock A method for solving the convex programming problem with convergence
  rate {${\mathcal O}(1/k^2)$}.
\newblock In \emph{{Doklady Akademii Nauk SSSR}}, volume 269, pages 543--547,
  1983.

\bibitem[Nesterov(2003)]{nesterov2003introductory}
Y.~Nesterov.
\newblock \emph{Introductory lectures on convex optimization: A basic course},
  volume~87.
\newblock Springer Science \& Business Media, 2003.

\bibitem[Nesterov(2018)]{nesterov2018lectures}
Y.~Nesterov.
\newblock \emph{Lectures on convex optimization}, volume 137.
\newblock Springer, 2018.

\end{thebibliography}

\end{document}